\documentclass[12pt, a4paper]{article}

\usepackage[utf8]{inputenc}
\usepackage[T1]{fontenc}
\usepackage[english]{babel}
\usepackage{amsmath, amssymb, amsthm}
\usepackage{mathtools}
\usepackage{geometry}
\usepackage{enumitem}
\usepackage{hyperref}
\usepackage{booktabs}
\usepackage{tabularx}
\usepackage{calc}
\usepackage{listings}
\usepackage[ruled,vlined,linesnumbered]{algorithm2e} 

\theoremstyle{plain}
\newtheorem{theorem}{Theorem}[section]
\newtheorem{proposition}[theorem]{Proposition}
\newtheorem{lemma}[theorem]{Lemma}
\newtheorem{corollary}[theorem]{Corollary}
\newtheorem{conjecture}[theorem]{Conjecture}

\theoremstyle{definition}
\newtheorem{definition}[theorem]{Definition}
\newtheorem{problem}[theorem]{Problem}

\theoremstyle{remark}

\newcommand{\Shat}{\widehat{S}}
\newcommand{\Irr}{\operatorname{Irr}}
\newcommand{\Aut}{\operatorname{Aut}}
\newcommand{\Inn}{\operatorname{Inn}}
\newcommand{\Out}{\operatorname{Out}}
\newcommand{\Syl}{\mathrm{Syl}}

\newcommand{\N}{\operatorname{N}}
\newcommand{\Z}{\operatorname{Z}}
\newcommand{\normal}{\trianglelefteq}
\newcommand{\Char}{\operatorname{Char}}
\newcommand\blfootnote[1]{%
    \begingroup
    \renewcommand\thefootnote{}\footnote{#1}%
    \addtocounter{footnote}{-1}%
    \endgroup
}

\title{\textbf{On the inductive Giannelli--McKay condition for sporadic groups}}
\author{Izan Gómez Taberner}
\date{\today}

\begin{document}
\maketitle

\begin{abstract}
We present a computational framework to verify a refinement of the McKay conjecture, proposed by E. Giannelli, for sporadic simple groups and their universal covers. This refinement asserts the existence of a bijection between the $p'$-characters of a finite group and those of the normalizer of its Sylow $p$-subgroup that does not increase character degrees. Our verification enforces stricter conditions, ensuring equivariance under the relevant outer automorphism group, preservation of the $p$-part of the character conductor, and compatibility with central characters. We establish theoretical reductions that limit the relevant outer automorphism action to an order of at most 2, allowing us to leverage Clifford theory and precomputed character tables to circumvent expensive explicit group constructions. Finally, we provide algorithms for classifying characters and constructing bijections that strictly satisfy these conditions.
\end{abstract}

\section{Introduction}

Let $G$ be a finite group, $p$ a prime and $P$ a Sylow $p$-subgroup of $G$. We denote by $\mathrm{Irr}_{p'}(G)$ the set of irreducible complex characters of $G$ whose degree $\chi(1)$ is not divisible by $p$. The McKay conjecture states that
$$|\mathrm{Irr}_{p'}(G)|=|\Irr_{p'}(\N_{G}(P))|.$$
In \cite{IMN}, the McKay conjecture was reduced to proving the existence of so-called \emph{McKay-good bijections} for quasi-simple groups. After work of many authors over the last 20 years, the final proof of the conjecture appeared in \cite{CS} using the Classification of Finite Simple Groups and \cite{IMN}.

The McKay conjecture was the first of the local-global counting conjectures, and has inspired a range of deeper statements. For instance, J. L. Alperin \cite{A} proposed a refinement involving height zero characters in Brauer $p$-blocks (this is now known as the Alperin--McKay conjecture), and E. C. Dade stated a series of conjectures involving chains of $p$-subgroups which generalized both of these conjectures \cite{D}. Further, in \cite{N}, G. Navarro proposed a refinement of the McKay and Alperin--McKay conjectures that involved the action of certain Galois automorphisms, and A. Turull conjectured a version with Schur indices in \cite{T}. However, this paper concerns a recent refinement of the McKay conjecture proposed by E. Giannelli \cite{G}\blfootnote{This work was supported by a Collaboration Scholarship from the Ministry of Education (2025-2026), carried out under the supervision of Professor J. Miquel Martínez at the Universitat de València during the 2025-2026 academic year.}.

\begin{conjecture}[E. Giannelli]
Let $G$ be a finite group, $p$ a prime and $P$ a Sylow $p$-subgroup of $G$. Then there is a bijection \[\varepsilon:\mathrm{Irr}_{p'}(G)\rightarrow\Irr_{p'}(\N_{G}(P))\]
such that $\varepsilon(\chi)(1)\leq\chi(1)$ for all $\chi\in\mathrm{Irr}_{p'}(G)$.
\end{conjecture}

In \cite{G}, Giannelli also proves his conjecture for the alternating and symmetric groups, and the case of $p$-solvable groups was known beforehand \cite{R}. Giannelli's refinement would be consistent with the existence of a McKay bijection $f:\mathrm{Irr}_{p'}(G)\rightarrow\Irr_{p'}(\N_{G}(P))$ where $f(\chi)$ is contained in the restriction $\chi_{\N_G(P)}$ for all $\chi\in\mathrm{Irr}_{p'}(G)$. This is however untrue in general, as shown by the alternating group $\mathsf{A}_5$ with $p=2$.

Following the approach from \cite{IMN}, Giannelli's conjecture was reduced to a problem on finite simple groups in \cite{HMN}. More precisely, it is shown in \cite[Section 3]{HMN} that Giannelli's conjecture would follow from the existence of McKay--good bijections $\Psi:\mathrm{Irr}_{p'}(X)\rightarrow\mathrm{Irr}_{p'}(\N_X(Q))$ for quasisimple groups $X$ with $Q\in\Syl_p(X)$, with the extra condition that $\Psi(\chi)(1)\leq \chi(1)$. This is ultimately proved for $p=2$ in \cite{HMN}, and, in some other cases, for some families of finite quasi-simple groups, including the sporadic groups. We mention that, in many cases (such as the case of sporadic groups), the verification of this conjecture relies on proving that any nontrivial character in $\Irr_{p'}(X)$ has a larger degree than the largest character degree in $\Irr_{p'}(\N_X(Q))$, and then applying \cite{CS}. This is not always true, as pointed out in \cite[Section 4]{HMN}.

The objective of this note is to show a computational approach to check a stronger version of this problem for sporadic groups with \cite{GAP}. The precise condition that we check is stated in Section \ref{sec:inductive-condition}. In fact, the McKay-good bijections that are checked with the algorithm proposed in this note also preserve the $p$-part of the character conductor, a behavior that would certainly agree with the Galois refinement of the McKay conjecture from \cite{N}. We mention that our approach requires the construction in \cite{GAP} of the automorphism groups of universal covers of sporadic groups. More on the limitations of this approach is discussed in Section \ref{sec:PC_specifications}. On a more positive note, our algorithm only relies on the fact that, if $X$ is the universal cover of a sporadic group, then $\Out(X)$ is trivial or cyclic of prime order. Therefore, it would only require minor modifications to computationally check quasisimple groups satisfying this condition on the outer automorphism group such as the central extensions of the simple alternating groups with $n\geq 7$ and some groups of Lie type (for instance, some groups of the families $E_8, F_4$ and $G_2$), as long as they can be constructed in \cite{GAP}.
 
The paper is structured as follows. In Section \ref{sec:inductive-condition}, we formalize the Refined McKay Problem for sporadic covers. Section \ref{sec:reducciones_teoricas} establishes the basic notation and provides theoretical reductions, demonstrating that the relevant action factors through an outer quotient of order at most $2$. In Section \ref{sec:clifford}, we rely on Clifford theory to introduce witness patterns and prove the Library Shortcut Theorem, which allows us to classify character types using precomputed character tables rather than explicit group constructions. Section \ref{sec:computational_framework} details the computational framework, including the resource allocation strategy and the Block-and-Match algorithm used to construct the bijections. Section \ref{sec:PC_specifications} outlines the computer specifications used for these verifications.

Finally, an example of an explicit classification of irreducible characters and the corresponding bijection for the sporadic group $\mathrm{Fi}_{24}'$ and prime $p=5$ produced by our code is compiled in Appendix \ref{app:bijection_example}. The GAP code and the output for most of the computed cases are publicly available at \url{https://github.com/IzanGT/GAP_github}. We note that some cases could not be computed due to hardware limitations (specifically, RAM exhaustion).

\section{The inductive condition}\label{sec:inductive-condition}

Throughout, our notation for finite groups follows \cite{Is} and our notation for characters follows \cite{Nbook}.

We focus only on the case when $X$ is a quasisimple group with $\Out(X)$ trivial or of prime order. Following the proof of \cite[Proposition 3.17]{Spa}, it is straightforward to show that the inductive condition for the Giannelli conjecture \cite[Conjecture 3.4]{HMN} is equivalent to conditions (C1)-(C3) below. 

\begin{problem}[Refined McKay Problem for Sporadic Covers]\label{inductive-condition}
\label{prob:mckay}
Let $S$ be a sporadic simple group with universal cover $\Shat$, let $p$ be a prime, and let $P \in \Syl_p(\Shat)$. Does 
there exist an explicit bijection 
\[
{}^{*} \colon \Irr_{p'}(\Shat) \longrightarrow \Irr_{p'}(\N_{\Shat}(P))
\]
satisfying the following four conditions?
\begin{enumerate}[label=\textnormal{(C\arabic*)}]
  \item \textbf{Equivariance.} $(\chi^*)^\alpha = (\chi^\alpha)^*$ for every $\alpha \in \Aut(\Shat)_P$.
  \item \textbf{Non-increasing degree.} $\chi^*(1) \le \chi(1)$.
  \item \textbf{Central character (Schur).} $\chi_{\Z(\Shat)} = \chi(1) \lambda \iff (\chi^*)_{\Z(\Shat)} = \chi^*(1) \lambda$ for the same $\lambda \in 
  \Irr(\Z(\Shat))$.
  \item \textbf{$p$-part of the conductor.} $C(\chi)_p = C(\chi^*)_p$.
\end{enumerate}
\end{problem}

\section{Notation and Theoretical Reductions}
\label{sec:reducciones_teoricas}

Let $G$ be a finite group. We use standard exponential notation for the action of automorphisms: for $\chi \in \Char(G)$ and $\alpha \in \Aut(G)$, $\chi^\alpha(g) 
:= \chi(\alpha^{-1}(g))$. The inertia group of $\theta \in \Irr(N)$ for $N \normal G$ is denoted $G_\theta$.

\begin{definition}[Universal Cover and Schur Multiplier]
\label{def:universal-cover}
Let $S$ be a finite non-abelian simple group. Its \emph{universal cover} $\Shat$ is the unique (up to isomorphism) finite group satisfying the following conditions:
\begin{enumerate}[label=(\roman*)]
  \item $\Shat$ is perfect;
  \item $\Shat / \Z(\Shat) \cong S$;
  \item $\Shat$ is the largest group satisfying (i) and (ii).
\end{enumerate}
The order of the center, $m(S) := |\Z(\Shat)|$, is called the \emph{Schur multiplier} of $S$. For sporadic simple groups, it is known that $m(S) \in 
\{1, 2, 3, 4, 6, 12\}$.
\end{definition}

\begin{definition}[Conductor of a Character]
\label{def:conductor}
For a character $\chi \in \Irr(G)$, let $\mathbb{Q}(\chi)$ be its field of values, which is the smallest subfield of $\mathbb{C}$ containing $\chi(g)$ for all 
$g \in G$. By a theorem of Brauer, $\mathbb{Q}(\chi)$ is contained in some cyclotomic field. The \emph{conductor} $C(\chi)$ is defined as the smallest positive 
integer $n$ such that $\mathbb{Q}(\chi) \subseteq \mathbb{Q}(\zeta_n)$, where $\zeta_n$ is a primitive $n$-th root of unity. Furthermore, the \emph{$p$-part of the 
conductor}, denoted $C(\chi)_p$, is the largest power of the prime $p$ dividing $C(\chi)$.
\end{definition}

\begin{definition}[Relevant outer quotient]
Let $P \in \Syl_p(\Shat)$. The relevant outer quotient for our problem is defined as $\Gamma_P := \Aut(\Shat)_P / \Inn(\Shat)_P$, where $\Aut(\Shat)_P = \{ \alpha 
\in \Aut(\Shat) : \alpha(P) = P \}$ and $\Inn(\Shat)_P = \Inn(\Shat) \cap \Aut(\Shat)_P$.
\end{definition}

Condition (C1) presents high computational complexity when evaluating the full group $\Aut(\Shat)_P$. However, the action reduces to the outer automorphism group 
since inner automorphisms act trivially on characters.

\begin{lemma}[The action factors through $\Aut/\Inn$]
\label{lem:accion-outer}
Let $\alpha, \beta \in \Aut(G)$ lie in the same coset modulo $\Inn(G)$. Then $\chi^\alpha = \chi^\beta$ for every $\chi \in \Irr(G)$. In particular, the action of 
$\Aut(G)$ on $\Irr(G)$ factors through $\Aut(G)/\Inn(G)$.
\end{lemma}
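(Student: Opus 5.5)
The plan is to reduce everything to the single fact that an inner automorphism fixes every class function. Write $\beta = \alpha \circ \iota_g$ for some $g \in G$, where $\iota_g(x) = gxg^{-1}$. This is possible because $\alpha,\beta$ lie in the same coset of $\Inn(G)$, and $\Inn(G)$ is normal in $\Aut(G)$, so left and right cosets agree. Then I compute directly from the definition $\chi^\beta(x) = \chi(\beta^{-1}(x))$ with $\beta^{-1} = \iota_g^{-1}\circ\alpha^{-1} = \iota_{g^{-1}}\circ\alpha^{-1}$. This gives
\[
\chi^\beta(x) = \chi\bigl(g^{-1}\alpha^{-1}(x)g\bigr) = \chi\bigl(\alpha^{-1}(x)\bigr) = \chi^\alpha(x),
\]
where the middle equality uses that $\chi$ is a class function. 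That settles the first assertion. It holds for every character, indeed every class function, not only irreducible ones.

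For the ``in particular'' part, I first note that $\chi \mapsto \chi^\alpha$ is a genuine (right or left, depending on convention) action of $\Aut(G)$ on $\Irr(G)$. Irreducibility is preserved because $\alpha$ permutes conjugacy classes, so the inner product $[\chi^\alpha,\chi^\alpha]$ equals $[\chi,\chi]=1$. The compatibility $(\chi^\alpha)^\beta = \chi^{\beta\alpha}$ is immediate from the definition. Taking $\alpha = \mathrm{id}$ in the first part shows that $\Inn(G)$ lies in the kernel of this action. Hence the action descends to a well-defined action of $\Aut(G)/\Inn(G)$, given by $\chi^{\alpha\Inn(G)} := \chi^\alpha$. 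Well-definedness of this formula is exactly the first part.

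The only point needing care, and hence the main obstacle (a mild one), is getting the composition order and inverse right in $\chi^\alpha(g)=\chi(\alpha^{-1}(g))$. Normality of $\Inn(G)$, via $\alpha\iota_g\alpha^{-1} = \iota_{\alpha(g)}$, makes the choice of coset side irrelevant, so I would state that identity explicitly to justify writing $\beta = \alpha\iota_g$.
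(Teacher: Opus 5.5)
Your proof is correct and is essentially the paper's argument: both write $\beta$ as $\alpha$ composed with an inner automorphism and use that characters are class functions. The only difference is cosmetic. The paper writes $\beta=\iota_g\circ\alpha$ and uses $\chi^{\iota_g\circ\alpha}=(\chi^\alpha)^{\iota_g}=\chi^\alpha$, while you write $\beta=\alpha\circ\iota_g$ and compute pointwise; as you note, these are interchangeable because $\Inn(G)$ is normal in $\Aut(G)$.
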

\begin{proof}
Write $\beta = \iota_g \circ \alpha$ for some $g \in G$. Since irreducible characters are class functions, $\chi^{\iota_g} = \chi$. Thus, evaluating the action 
yields $\chi^\beta = \chi^{\iota_g \circ \alpha} = (\chi^\alpha)^{\iota_g} = \chi^\alpha$.
\end{proof}

For sporadic groups, the structure of $\Gamma_P$ is constrained. 

\begin{theorem}
\label{thm:gamma-P}
For every sporadic simple group $S$, every prime $p$, and every $P \in \Syl_p(\Shat)$, the group $\Gamma_P$ has order at most $2$.
\end{theorem}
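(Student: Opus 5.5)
The plan is to show that $\Gamma_P$ embeds into $\Out(\Shat)$, and then to use the known list of outer automorphism groups of the sporadic covers.

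\textbf{Step 1 (embedding).} The inclusion $\Aut(\Shat)_P \hookrightarrow \Aut(\Shat)$ induces a homomorphism
\[
\Aut(\Shat)_P \longrightarrow \Aut(\Shat)/\Inn(\Shat) = \Out(\Shat).
\]
Its kernel is $\Aut(\Shat)_P \cap \Inn(\Shat) = \Inn(\Shat)_P$. Hence $\Gamma_P$ is isomorphic to a subgroup of $\Out(\Shat)$, and $|\Gamma_P| \le |\Out(\Shat)|$.

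In fact equality holds, by a Frattini argument. Given $\alpha \in \Aut(\Shat)$, the subgroup $\alpha(P)$ is again a Sylow $p$-subgroup of $\Shat$. By Sylow's theorem there is $g \in \Shat$ with $\iota_g\alpha(P) = P$. So every coset of $\Inn(\Shat)$ meets $\Aut(\Shat)_P$, which gives $\Gamma_P \cong \Out(\Shat)$. We only need the inequality, but the isomorphism is worth recording, since it shows the bound is sharp exactly when $\Out(\Shat)\neq 1$.

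\textbf{Step 2 (reduction to $S$).} We relate $\Out(\Shat)$ to $\Out(S)$. The center $\Z(\Shat)$ is characteristic, so every automorphism of $\Shat$ induces one of $S = \Shat/\Z(\Shat)$. This gives a map $\Aut(\Shat) \to \Aut(S)$.

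This map is injective. Suppose $\alpha$ induces the identity on $S$. Then $\alpha(x) = x\,z(x)$ with $z(x) \in \Z(\Shat)$, and $z$ is a homomorphism $\Shat \to \Z(\Shat)$. Since $\Shat$ is perfect and $\Z(\Shat)$ is abelian, $z$ is trivial.

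The map is also surjective, by the universal property of the universal cover: every automorphism of $S$ lifts to $\Shat$. This is the standard fact that $\Aut(\Shat) \cong \Aut(S)$ for perfect central extensions that are universal.

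Under this isomorphism, $\Inn(\Shat)$ maps onto $\Inn(S)$. Therefore $\Out(\Shat) \cong \Out(S)$.

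\textbf{Step 3 (the classification input).} We invoke the known outer automorphism groups of the $26$ sporadic simple groups from the ATLAS. In every case $\Out(S)$ is trivial or cyclic of order $2$. The second case occurs precisely for $M_{12}$, $M_{22}$, $J_2$, $J_3$, $HS$, $McL$, $Suz$, $He$, $Fi_{22}$, $Fi_{24}'$, $O'N$, $HN$ and $T$, if the Tits group is included. Combining the three steps gives $|\Gamma_P| \le |\Out(S)| \le 2$.

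The only genuinely nontrivial input is Step 3, which rests on the classification. The main technical point to handle carefully is the lifting assertion in Step 2. If one prefers to avoid it, injectivity alone already gives $\Out(\Shat) \hookrightarrow \Out(S)$ once we check that inner automorphisms correspond. That is enough for the upper bound, so surjectivity is not actually needed for the theorem.
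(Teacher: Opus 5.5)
Your proof is correct and takes essentially the same route as the paper. The Frattini argument (or, as you note, just the kernel computation) identifies $\Gamma_P$ with a subgroup of $\Out(\Shat)$, and then $\Out(\Shat)\cong\Out(S)$ together with the ATLAS list gives the bound; the only difference is that you actually justify $\Out(\Shat)\cong\Out(S)$ via perfectness and the lifting property of the universal cover, which the paper simply asserts.
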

\begin{proof}
By the Frattini argument, since all Sylow $p$-subgroups of $\Shat$ are conjugate, we have $\Aut(\Shat) = \Inn(\Shat)\Aut(\Shat)_P$. By the Second Isomorphism Theorem, 
the quotient $\Gamma_P = \Aut(\Shat)_P / \Inn(\Shat)_P$ is exactly isomorphic to $\Aut(\Shat)/\Inn(\Shat) = \Out(\Shat)$. For sporadic groups, it is well known from the ATLAS that $|\Out(S)| \in \{1, 2\}$. Since the universal cover satisfies $\Out(\Shat) \cong \Out(S)$, it follows that $\Gamma_P$ has order at most $2$.
\end{proof}

This structural constraint simplifies the verification of (C1), reducing it to a single equation.

\begin{corollary}[Reduction to one $\alpha$]
\label{cor:un-alpha}
If $|\Gamma_P| = 1$, condition (C1) is automatic. If $|\Gamma_P| = 2$, choose any $\alpha \in \Aut(\Shat)_P$ whose image is the unique non-trivial element in 
$\Gamma_P$. Then (C1) is equivalent to the single equation $(\chi^*)^\alpha = (\chi^\alpha)^*$ for all $\chi \in \Irr_{p'}(\Shat)$.
\end{corollary}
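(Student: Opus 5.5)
The plan is to use Lemma~\ref{lem:accion-outer} twice: once for $\Shat$ and once for $\N_{\Shat}(P)$. The point is that both sides of (C1) depend on $\alpha$ only through its image in $\Gamma_P$.

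First I record how $\Aut(\Shat)_P$ acts on both sets. Each $\alpha\in\Aut(\Shat)_P$ preserves degrees, so it permutes $\Irr_{p'}(\Shat)$. Since $\alpha(P)=P$, it also restricts to an automorphism of $\N_{\Shat}(P)$, and so it permutes $\Irr_{p'}(\N_{\Shat}(P))$. Condition (C1) is an equation between these two actions.

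Next I show that each side depends only on the coset of $\alpha$ modulo $\Inn(\Shat)_P$. Take $\beta=\iota_g\circ\alpha$ with $\iota_g\in\Inn(\Shat)_P$. On $\Shat$, Lemma~\ref{lem:accion-outer} gives $\chi^\beta=\chi^\alpha$, so $(\chi^\beta)^*=(\chi^\alpha)^*$. On the normalizer side, $\iota_g$ fixes $P$, so $g\in\N_{\Shat}(P)$ and $\iota_g$ restricts to an inner automorphism of $\N_{\Shat}(P)$. Hence $(\chi^*)^{\beta}=(\chi^*)^\alpha$, again by Lemma~\ref{lem:accion-outer} applied to $G=\N_{\Shat}(P)$. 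The only step needing care is this one: we must check that $\iota_g\in\Aut(\Shat)_P$ forces $g\in\N_{\Shat}(P)$, and here $\iota_g(P)=gPg^{-1}=P$ gives exactly that.

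Then (C1) holds for $\alpha$ if and only if it holds for every element of its coset. So (C1) for all of $\Aut(\Shat)_P$ is equivalent to (C1) for one representative of each element of $\Gamma_P$. By Theorem~\ref{thm:gamma-P}, $|\Gamma_P|\le 2$.

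If $|\Gamma_P|=1$, then $\Aut(\Shat)_P=\Inn(\Shat)_P$. Both sides of (C1) are then just $\chi^*$, so (C1) is automatic.

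If $|\Gamma_P|=2$, the trivial coset is automatic for the same reason. The non-trivial coset reduces to the single equation for the chosen $\alpha$. Moreover, since the coset representatives are interchangeable by the previous step, the choice of $\alpha$ is arbitrary.
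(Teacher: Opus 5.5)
Your proof is correct and follows the paper's argument: you split $\Aut(\Shat)_P$ into cosets of $\Inn(\Shat)_P$ and use Lemma~\ref{lem:accion-outer} to show that both sides of (C1) depend only on the coset. You also make explicit a point the paper's proof leaves implicit: $\iota_g\in\Inn(\Shat)_P$ forces $g\in\N_{\Shat}(P)$, so $\iota_g$ restricts to an inner automorphism of $\N_{\Shat}(P)$ and the lemma genuinely applies to $\chi^*$ on the normalizer side.
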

\begin{proof}
Every $\beta \in \Aut(\Shat)_P$ is either inner, making (C1) automatic by Lemma \ref{lem:accion-outer}, or can be written as $\beta = \alpha \circ \sigma$ for 
some $\sigma \in \Inn(\Shat)_P$. By Lemma \ref{lem:accion-outer}, $\sigma$ acts trivially, so $(\chi^*)^\beta = (\chi^*)^\alpha$ and $(\chi^\beta)^* = 
(\chi^\alpha)^*$. The equation $(\chi^*)^\alpha = (\chi^\alpha)^*$ thus implies (C1).
\end{proof}

From now on, when $|\Gamma_P| = 2$, let $\alpha$ denote this chosen representative. Because $\alpha^2 \in \Inn(\Shat)_P$, it acts trivially. This partitions 
$\Irr(\Shat)$ into two types:
\begin{itemize}
  \item \textbf{Type 1:} $\theta^\alpha = \theta$.
  \item \textbf{Type 2:} $\theta^\alpha \neq \theta$, and the $\langle \alpha \rangle$-orbit of $\theta$ is $\{\theta, \theta^\alpha\}$.
\end{itemize}
Any bijection ${}^*$ satisfies (C1) if and only if it preserves these types and maps each $\langle \alpha \rangle$-orbit on $\Shat$ to an $\langle \alpha 
\rangle$-orbit on $\N_{\Shat}(P)$.

\begin{proposition}[Independence of the Sylow $p$-subgroup]
  \label{prop:independence_sylow}
  The type associated with these characters is independent of the chosen Sylow $p$-subgroup.
\end{proposition}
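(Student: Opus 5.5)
The idea is to show that, up to inner automorphisms, the chosen representative $\alpha$ does not depend on the Sylow subgroup. Lemma~\ref{lem:accion-outer} then gives the same types on $\Irr(\Shat)$. For the local characters, we transport everything by conjugation, so the types on $\Irr_{p'}(\N_{\Shat}(Q))$ correspond to those on $\Irr_{p'}(\N_{\Shat}(P))$.

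\textbf{Step 1 (global types).} Let $P, Q \in \Syl_p(\Shat)$, and let $\alpha_P \in \Aut(\Shat)_P$ and $\alpha_Q \in \Aut(\Shat)_Q$ map to the nontrivial element of $\Gamma_P$ and $\Gamma_Q$. By the proof of Theorem~\ref{thm:gamma-P}, the inclusion $\Aut(\Shat)_P \hookrightarrow \Aut(\Shat)$ induces an isomorphism $\Gamma_P \cong \Out(\Shat)$, and likewise for $Q$. So $\alpha_P$ and $\alpha_Q$ both map to the unique nontrivial element of $\Out(\Shat)$, and hence $\alpha_Q = \iota_g \circ \alpha_P$ for some $g \in \Shat$. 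Lemma~\ref{lem:accion-outer} gives $\theta^{\alpha_Q} = \theta^{\alpha_P}$ for every $\theta \in \Irr(\Shat)$. The Type~1/Type~2 partition of $\Irr(\Shat)$ therefore does not depend on the Sylow subgroup or on the chosen representative. If $\Gamma_P$ is trivial, all characters are of Type~1 and there is nothing to prove.

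\textbf{Step 2 (local types).} Choose $x \in \Shat$ with $Q = P^x$. Conjugation $c_x$ gives an isomorphism $\N_{\Shat}(P) \to \N_{\Shat}(Q)$, and $\psi \mapsto \psi^{c_x}$ is a degree-preserving bijection $\Irr_{p'}(\N_{\Shat}(P)) \to \Irr_{p'}(\N_{\Shat}(Q))$. Set $\alpha' := c_x \alpha_P c_x^{-1}$. Then $\alpha'$ stabilises $Q$ and lies in the same $\Inn(\Shat)$-coset as $\alpha_P$, so it is a valid representative for $\Gamma_Q$. Directly from the definitions, $(\psi^{c_x})^{\alpha'} = (\psi^{\alpha_P})^{c_x}$. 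Hence $\psi$ is $\alpha_P$-fixed if and only if $\psi^{c_x}$ is $\alpha'$-fixed, and $\langle\alpha_P\rangle$-orbits are carried to $\langle\alpha'\rangle$-orbits.

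\textbf{Step 3 (independence of the representative).} It remains to check that any other representative $\alpha_Q \in \Aut(\Shat)_Q$ acts on $\Irr(\N_{\Shat}(Q))$ exactly as $\alpha'$ does. We have $\alpha_Q = \iota_h \circ \alpha'$ for some $h \in \Shat$. Both automorphisms stabilise $Q$, so $\iota_h$ stabilises $Q$, which forces $h \in \N_{\Shat}(Q)$. Restricted to $\N_{\Shat}(Q)$, $\iota_h$ is therefore an inner automorphism of $\N_{\Shat}(Q)$. Applying Lemma~\ref{lem:accion-outer} to the group $\N_{\Shat}(Q)$ shows that $\alpha_Q$ and $\alpha'$ induce the same permutation of $\Irr(\N_{\Shat}(Q))$.

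The step that needs the most care is Step~3. It is the observation that elements of $\Inn(\Shat)_Q$ come from $\N_{\Shat}(Q)$, so that Lemma~\ref{lem:accion-outer} can be applied inside the normaliser rather than only in $\Shat$. Once that is in place, the conjugation bijection of Step~2 preserves types. This is what makes the type data, and hence the matching problem for (C1), independent of the choice of $P$.
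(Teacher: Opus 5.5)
Your proof is correct. On the global side, Step~1 is essentially the paper's argument. The paper takes $P' = P^g$ and sets $\alpha' = \iota_g \circ \alpha \circ \iota_g^{-1}$, which stabilises $P'$ and lies in the same $\Inn(\Shat)$-coset as $\alpha$. It then concludes by Lemma~\ref{lem:accion-outer} that the partition of $\Irr_{p'}(\Shat)$ is unchanged. You instead use the isomorphism $\Gamma_Q \cong \Out(\Shat)$ from the proof of Theorem~\ref{thm:gamma-P}. This has the small advantage of handling an arbitrary representative of $\Gamma_Q$, not just the conjugate one.

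The paper's proof stops at that point, because the proposition is only meant to cover the partition of $\Irr(\Shat)$. Your Steps~2 and~3 go further. They show that the types on $\Irr_{p'}(\N_{\Shat}(P))$ are carried to those on $\Irr_{p'}(\N_{\Shat}(Q))$ by conjugation, using the same conjugated-automorphism device as the paper but on the local side. They also show these local types do not depend on the chosen representative. The statement does not need this, but it is the invariance actually relied on when the normaliser side is classified. Step~3's key observation is that every element of $\Inn(\Shat)_Q$ is conjugation by an element of $\N_{\Shat}(Q)$, hence inner on the normaliser. The paper leaves exactly this tacit in Corollary~\ref{cor:un-alpha}, where Lemma~\ref{lem:accion-outer} is applied to characters of $\N_{\Shat}(P)$ even though it is stated only for $\Aut(G)$ acting on $\Irr(G)$.
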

\begin{proof}
  If $P' = P^g$ is another Sylow $p$-subgroup of $\Shat$ with $g \in \Shat$, then $\alpha' := \iota_g \circ \alpha \circ \iota_g^{-1}$ stabilizes $P'$ and represents a non-trivial class of $\Gamma_{P'}$. By Lemma \ref{lem:accion-outer}, the corresponding partition of $\Irr_{p'}(\Shat)$ is the same.
\end{proof}

Thus the type assignment depends only on $\Shat$ and $p$, not on the choice of the Sylow $p$-subgroup, which may be chosen arbitrarily.

To assign a type to these characters computationally without constructing the explicit semidirect product, we rely on computing the permutation of conjugacy classes induced by the outer automorphism $\alpha$. The procedure is detailed in Algorithm \ref{algo:standard_path}.

\begin{algorithm}[H]
\caption{Standard Path for Character Classification}\label{algo:standard_path}
\SetKwInOut{Input}{Input}
\SetKwInOut{Output}{Output}
\Input{A finite group $G$ (representing either $\Shat$ or $\N_{\Shat}(P)$) and a lift $\alpha$ of the non-trivial class of $\Gamma_P$ (or \texttt{fail} if no outer action exists).}
\Output{Partition of $\Irr_{p'}(G)$ into Type 1 singletons and Type 2 pairs.}

\If{$\alpha = \text{\normalfont\texttt{fail}}$}{
    Mark all $\chi \in \Irr_{p'}(G)$ as \textbf{Type 1} and \Return\;
}
Compute the permutation $\sigma$ of the conjugacy classes of $G$ such that $\alpha(C_j) = C_{\sigma(j)}$\;
Initialize a dictionary mapping the string representation of character values to their index in $\Irr(G)$ to avoid quadratic search complexity\;
\For{each $\chi_i \in \Irr(G)$}{
    Compute the values of the conjugate character $\chi_i^\alpha$ by evaluating $\chi_i^\alpha(C_j) = \chi_i(C_{\sigma^{-1}(j)})$\;
    Query the dictionary to identify the index $k$ matching the values of $\chi_i^\alpha$ in $\Irr(G)$\;
    Record the permutation of characters: $\pi(i) = k$\;
}
\For{each $\chi_i \in \Irr_{p'}(G)$}{
    \eIf{$\pi(i) = i$}{
        Mark $\chi_i$ as \textbf{Type 1}\;
    }{
        \If{$\pi(i) = j$ with $j \neq i$}{
            Mark $\chi_i$ and $\chi_j$ as \textbf{Type 2} and pair them as an orbit $\{i, j\}$\;
        }
    }
}
\end{algorithm}

Hereafter, both in the text and in the implementation, this method of determining the type of each character will be referred to as the \textbf{Standard Path}. The constraint is obvious, since this path requires the computer to calculate expensive groups, conjugacy classes, and their permutations.

\section{Clifford Witnesses and the Library Shortcut}
\label{sec:clifford}

To determine the character types without directly computing the automorphism action, we can embed $\Shat$ in a larger group and analyze the restriction of characters 
from this larger group back to $\Shat$. This approach relies on Clifford theory and it serves both to verify the correctness of the result through an entirely distinct approach and to provide the underlying principle behind the shortcut that, in many cases, allows us to bypass the full group construction by leveraging precomputed character tables from the library \texttt{CTblLib}.

We define the cyclic extension $A := \Shat \rtimes \langle \alpha \rangle$, where $\alpha \in \Aut(\Shat)_P$ is the chosen representative for 
the non-trivial class of $\Gamma_P$. The following proposition shows how restriction from $A$ to $\Shat$ behaves, establishing predictable restriction patterns.

\begin{proposition}[Classification via the extension]
\label{prop:clasificacion-A-indice-n}
Let $A = \Shat \rtimes \langle \alpha \rangle$, where $\alpha$ represents the non-trivial class of $\Gamma_P$, and let $\theta \in \Irr(\Shat)$.
Then $\theta$ is of \emph{Type 1} if and only if there exists $\psi \in \Irr(A)$ that restricts to $\theta$ (\emph{Pattern a}), 
and it is of \emph{Type 2} if and only if there exists $\psi \in \Irr(A)$ that restricts to $\theta + \theta^\alpha$ (\emph{Pattern b}).
\end{proposition}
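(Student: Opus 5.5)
The plan is to apply Clifford theory to the normal subgroup $\Shat \normal A$, using that $A/\Shat \cong \langle \alpha \rangle$ is cyclic. I do not assume $\alpha$ has order $2$ as an automorphism, only that $\alpha^2 \in \Inn(\Shat)_P$. So let $n$ be the order of $\alpha$ and $A/\Shat$ cyclic of order $n$.

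First I identify the conjugation action of $A$ on $\Irr(\Shat)$. Conjugation by the generator $a \in A$ corresponding to $\alpha$ induces $\alpha$ on $\Shat$, up to the convention $\alpha$ versus $\alpha^{-1}$, which does not affect orbits. The subgroup $\Shat$ acts trivially on $\Irr(\Shat)$. Since $\alpha^2$ is inner, Lemma \ref{lem:accion-outer} shows that $a^2$ also acts trivially. Hence the $A$-orbit of $\theta$ is exactly its $\langle \alpha\rangle$-orbit, $\{\theta\}$ or $\{\theta,\theta^\alpha\}$. Correspondingly, the inertia group $A_\theta$ is either $A$ (Type 1) or $T := \Shat\langle a^2\rangle$, of index $2$ in $A$ (Type 2).

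\emph{Type 1 and Pattern a.} If $\theta$ is Type 1, it is $A$-invariant. Since $A/\Shat$ is cyclic, $\theta$ extends to some $\psi\in\Irr(A)$ \cite[Cor.~11.22]{Is}, and $\psi_{\Shat}=\theta$. Conversely, if $\psi_{\Shat}=\theta$, then $\theta$ is invariant under $A$, because restrictions of characters of $A$ are $A$-invariant. Hence $\theta^\alpha=\theta$.

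\emph{Type 2 and Pattern b.} If $\theta$ is Type 2, then $T/\Shat$ is cyclic, so $\theta$ extends to some $\eta\in\Irr(T)$. By the Clifford correspondence, $\psi:=\eta^A$ is irreducible. Mackey's formula, or equivalently Clifford's theorem with ramification $e=1$, gives $\psi_{\Shat}=\theta+\theta^\alpha$.

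For the converse I must rule out a Type 1 character $\theta$ arising as $\psi_{\Shat}=\theta+\theta^\alpha=2\theta$; this is the one delicate point of the proof. Suppose $\theta$ is Type 1 and fix an extension $\hat\theta$ as above. By Gallagher's theorem, every $\psi\in\Irr(A\mid\theta)$ has the form $\hat\theta\beta$ with $\beta\in\Irr(A/\Shat)$. Since $A/\Shat$ is abelian, $\beta$ is linear, so $\psi_{\Shat}=\theta$ and never $2\theta$. Hence if $\psi_{\Shat}=\theta+\theta^\alpha$ for some $\psi\in\Irr(A)$, we must have $\theta^\alpha\neq\theta$, i.e.\ $\theta$ is Type 2.

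Finally, Type 1 and Type 2 partition $\Irr(\Shat)$, as noted after Corollary \ref{cor:un-alpha}. Combining the two equivalences therefore gives the proposition.
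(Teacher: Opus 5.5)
Your proof is correct and follows the same route as the paper: Clifford theory over $\Shat \normal A$, extending the invariant $\theta$ through the cyclic quotient in the Type 1 case, and inducing from the index-$2$ inertia group $\Shat\langle a^2\rangle$ in the Type 2 case. You also prove the converse implications, which the paper's proof omits: Pattern a $\Rightarrow$ Type 1, and Pattern b $\Rightarrow$ Type 2, where Gallagher's theorem rules out $\psi_{\Shat}=2\theta$. Your argument therefore fully establishes both directions of the ``if and only if''.
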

\begin{proof}
Because $\overline{\alpha}$ has order 2 in $\Out(\Shat)$ and $\alpha \notin \Inn(\Shat)$, the order $n$ of $\alpha$ must be
 even, with $\alpha^2 \in \Inn(\Shat)$. Let $B = \Shat \langle \alpha^2 \rangle$. Since $A/\Shat \cong C_n$ is cyclic and $n$ is even, $B/\Shat$ is the
 \emph{unique} subgroup of index 2 in $A/\Shat$, and $B$ is a maximal subgroup of $A$.

By Lemma \ref{lem:accion-outer}, $\alpha^2 \in \Inn(\Shat)$ acts trivially on $\Irr(\Shat)$. Therefore, every $\langle \alpha \rangle$-orbit on $\Irr(\Shat)$ 
has size at most 2, and $B \le A_\theta$ for every $\theta \in \Irr(\Shat)$.

If $\theta$ is of Type 1 ($\theta^\alpha = \theta$), it is fully $A$-invariant. Since the quotient $A/\Shat$ is cyclic, standard Clifford theory dictates that 
$\theta$ extends to $A$. Thus, there exists $\psi \in \Irr(A)$ with $\psi_{\Shat} = \theta$, establishing Pattern (a).

If $\theta$ is of Type 2 ($\theta^\alpha \neq \theta$), then $\alpha \notin A_\theta$. We have the chain of subgroups $\Shat \le B \le A_\theta \subsetneq A$. 
Because $B$ is maximal in $A$, the inertia subgroup is exactly $A_\theta = B$. Because $B/\Shat$ is cyclic, $\theta$ extends to some $\xi \in \Irr(B)$, 
satisfying $\xi(1) = \theta(1)$. By the Clifford correspondence, $\psi := \xi^A$ is irreducible in $A$. Furthermore, standard induction degrees yield $\psi(1) 
= [A : B]\xi(1) = 2\theta(1)$. 

By Clifford's restriction theorem applied to $\Shat \normal A$, $\psi_{\Shat} = e(\theta + \theta^\alpha)$ for some multiplicity $e \ge 1$. Checking degrees, 
we have $\psi(1) = e(2\theta(1))$. Substituting $\psi(1) = 2\theta(1)$ forces $e = 1$. Thus, $\psi_{\Shat} = \theta + \theta^\alpha$, establishing Pattern (b).
\end{proof}

As established in Proposition \ref{prop:clasificacion-A-indice-n}, the explicit semidirect product $A = \Shat \rtimes \langle \alpha \rangle$ perfectly controls the 
inertia groups and guarantees that the Clifford multiplicities are exactly 1. 

\begin{definition}[Clifford Witnesses]
\label{def:witnesses}
Let $\alpha \in \Aut(\Shat)_P$ represent the non-trivial class in $\Gamma_P$, and let $A = \Shat \rtimes \langle \alpha \rangle$. For any irreducible character 
$\theta \in \Irr_{p'}(\Shat)$:
\begin{itemize}
    \item A character $\psi \in \Irr(A)$ satisfying $\psi_{\Shat} = \theta$ is called a \textbf{Type 1 witness} for $\theta$.
    \item A character $\psi \in \Irr(A)$ satisfying $\psi_{\Shat} = \theta + \theta^\alpha$ is called a \textbf{shared Type 2 witness} for the orbit 
    $\{\theta, \theta^\alpha\}$.
\end{itemize}
\end{definition}

This theoretical foundation yields an algorithm to verify character types explicitly. When verification is requested, Algorithm \ref{algo:verification} constructs the semidirect product to audit the character restrictions.

\begin{algorithm}[H]
\caption{Clifford Verification via Explicit Semidirect Product}\label{algo:verification}
\SetKwInOut{Input}{Input}
\SetKwInOut{Output}{Output}
\Input{A finite group $\Shat$, a Sylow $p$-subgroup $P$, and a previously computed partition of $\Irr_{p'}(\Shat)$.}
\Output{Verification of the Type 1 / Type 2 partition.}

Select a lift $\alpha \in \Aut(\Shat)_P$ representing the non-trivial class of $\Gamma_P$\;
Construct the semidirect product $A := \Shat \rtimes \langle \alpha \rangle$\;
Compute the restriction map $\text{fus}: \Shat \hookrightarrow A$\;
\For{each $\psi \in \Irr(A)$}{
    Restrict $\psi$ to $\Shat$ via the map $\text{fus}$\;
    \If{$[\psi_{\Shat}, \theta] = 1$ for a unique $\theta \in \Irr_{p'}(\Shat)$}{
        Confirm $\theta$ possesses a \textbf{Type 1} witness\;
    }
    \ElseIf{$[\psi_{\Shat}, \theta_1] = 1$ and $[\psi_{\Shat}, \theta_2] = 1$ for distinct $\theta_1, \theta_2 \in \Irr_{p'}(\Shat)$}{
        Confirm $\{\theta_1, \theta_2\}$ possesses a \textbf{Type 2} witness\;
    }
}
Assert that every p'-character matches its previously computed type via a corresponding witness\;
\end{algorithm}

\subsection{The Library Shortcut}

Proposition \ref{prop:clasificacion-A-indice-n} demonstrates that the internal order $n$ of the lift $\alpha$ does not affect the restriction patterns. The restriction patterns are fully determined by the outer action.

Consequently, the construction of $A = \Shat \rtimes \langle \alpha \rangle$ can be substituted with any index-2 extension realizing the same outer 
action, obtaining the exact same partition of $\Irr_{p'}(\Shat)$.

\begin{theorem}[Library Shortcut for $\Shat$]
\label{thm:atajo-S}
Let $H$ be a finite group with $\Shat \normal H$ and $[H : \Shat] = 2$, such that conjugation by elements of $H \setminus \Shat$ induces the 
non-trivial outer class $\overline{\alpha} \in \Gamma_P$. The restriction map $\Irr(H) \to \Char(\Shat)$ correctly produces the
Type 1 / Type 2 partition with witness patterns (a) and (b), identical to those produced by $A = \Shat \rtimes \langle \alpha \rangle$.
\end{theorem}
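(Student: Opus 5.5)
The plan is to rerun the argument of Proposition \ref{prop:clasificacion-A-indice-n} directly inside $H$, with the semidirect product replaced by $H$. The key observation is that the Type 1 / Type 2 partition is defined purely by the action of $\overline{\alpha}$ on $\Irr(\Shat)$. By Lemma \ref{lem:accion-outer}, this action depends only on the class of $\alpha$ modulo $\Inn(\Shat)$.

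First, fix $h \in H \setminus \Shat$ and let $\iota_h|_{\Shat}$ denote the automorphism of $\Shat$ induced by conjugation by $h$. By hypothesis, $\iota_h|_{\Shat}$ and $\alpha$ lie in the same coset of $\Inn(\Shat)$. Lemma \ref{lem:accion-outer} then gives $\theta^h = \theta^\alpha$ for every $\theta \in \Irr(\Shat)$. Since $[H:\Shat]=2$, we have $H = \Shat \cup h\Shat$, and elements of $\Shat$ act trivially on $\Irr(\Shat)$. Hence for every $\theta$ we get $H_\theta = H$ if $\theta^\alpha=\theta$, and $H_\theta = \Shat$ otherwise. So the inertia groups in $H$ are determined exactly by the type of $\theta$.

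Next, I apply Clifford theory to $\Shat \normal H$ with $H/\Shat \cong C_2$ cyclic. Suppose $\theta$ is of Type 1. Then $\theta$ is $H$-invariant, and since $H/\Shat$ is cyclic, $\theta$ extends to some $\psi\in\Irr(H)$ with $\psi_{\Shat}=\theta$; this is Pattern (a). Suppose $\theta$ is of Type 2. Then $H_\theta=\Shat$, so by the Clifford correspondence $\psi:=\theta^H$ is irreducible of degree $2\theta(1)$. By Mackey's formula (or Clifford's restriction theorem combined with the same degree count as in Proposition \ref{prop:clasificacion-A-indice-n}), $\psi_{\Shat}=\theta+\theta^h=\theta+\theta^\alpha$; this is Pattern (b).

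For the converse directions, let $\psi\in\Irr(H)$ be arbitrary. Clifford's theorem gives $\psi_{\Shat}=e\sum_{\theta'}\theta'$, where $\theta'$ runs over the $H$-orbit of some $\theta$. If the orbit is a singleton, Gallagher's theorem with $H/\Shat$ abelian, or simply a degree argument using $[H:\Shat]=2$ and $e^2 t\le 2$, forces $e=1$. If the orbit has size two, then $e=1$ as shown above. Hence every $\psi\in\Irr(H)$ restricts either to a single invariant $\theta$ or to $\theta+\theta^\alpha$ with $\theta\neq\theta^\alpha$. The witnesses produced by $H$ therefore classify each $\theta$ exactly as the $A$-witnesses do, so the two partitions coincide.

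The main obstacle is really only the identification $\theta^h=\theta^\alpha$, which depends on correctly interpreting the hypothesis that $h$ ``induces the non-trivial outer class''. The subtlety is that $\overline{\alpha}$ lives in $\Gamma_P$, whereas $\iota_h$ need not normalize $P$. I would handle this by invoking the isomorphism $\Gamma_P\cong\Out(\Shat)$ from the proof of Theorem \ref{thm:gamma-P}. This reduces the statement to equality in $\Out(\Shat)$, after which Lemma \ref{lem:accion-outer} applies directly.
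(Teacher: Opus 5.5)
Your proposal is correct and follows the paper's proof essentially step by step: it uses Lemma \ref{lem:accion-outer} to get $H_\theta\in\{H,\Shat\}$ according to type, extension across the cyclic quotient $H/\Shat$ for Type 1, and irreducible induction plus the degree count (or Mackey) giving $\psi_{\Shat}=\theta+\theta^\alpha$ for Type 2. Two things you add are left implicit in the paper: reading the hypothesis through $\Gamma_P\cong\Out(\Shat)$, and the converse that every $\psi\in\Irr(H)$ restricts with multiplicity $e=1$ via $e^2t\le 2$. That converse is what guarantees the multiplicity-one tests in Algorithm \ref{algo:library} cover every irreducible character of $H$.
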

\begin{proof}
By Lemma \ref{lem:accion-outer}, the action of $H/\Shat$ on $\Irr(\Shat)$ is exactly the action of $\overline{\alpha}$. Let $\theta 
\in \Irr(\Shat)$. Because $[H : \Shat] = 2$, the inertia group $H_\theta$ must be strictly either $H$ or $\Shat$.

If $\theta$ is of Type 1 ($\theta^\alpha = \theta$), then $H_\theta = H$. Since the quotient $H/\Shat \cong C_2$ is cyclic, standard 
Clifford theory dictates that $\theta$ extends to $H$. Thus, there exists $\psi \in \Irr(H)$ such that $\psi_{\Shat} = \theta$, which exactly matches Pattern (a).

If $\theta$ is of Type 2 ($\theta^\alpha \neq \theta$), then $H_\theta = \Shat$. By the Clifford correspondence, $\theta$ induces to an irreducible 
character $\psi = \theta^H \in \Irr(H)$. Standard induction degrees give $\psi(1) = [H : \Shat]\theta(1) = 2\theta(1)$. By Clifford's restriction theorem 
applied to $\Shat \normal H$, we know $\psi_{\Shat} = e(\theta + \theta^\alpha)$ for some multiplicity $e \ge 1$. Comparing degrees yields $2\theta(1) = 
e(2\theta(1))$, which strictly forces $e = 1$. Thus, $\psi_{\Shat} = \theta + \theta^\alpha$, matching Pattern (b).
\end{proof}

Theorem \ref{thm:atajo-S} optimizes the classification process. When the character tables for $\Shat$ and a suitable index-2 extension $H$ are 
available in a computational library, the construction of the explicit semidirect product $A$ can be avoided.

To extend this to the normalizer side, we note that if $H$ is an index-2 extension of $\Shat$, it yields an index-2 extension for the Sylow normalizer. 
By a standard Frattini argument, for any $P \in \Syl_p(\Shat)$, $H = \Shat \cdot \N_H(P)$. Consequently, $\N_{\Shat}(P) \normal \N_H(P)$ and 
\[
\N_H(P)/\N_{\Shat}(P) \cong H/\Shat \cong C_2.
\]
Because $\N_H(P)$ is an index-2 extension of $\N_{\Shat}(P)$ that induces the same outer action, the restriction map $\Irr(\N_H(P)) \to \Irr(\N_{\Shat}(P))$ 
also classifies the character types via identical witness patterns. 

Thus, the character types can be entirely deduced from the restriction of characters in a suitable index-2 extension, bypassing the internal geometry of the 
specific semidirect product $A$.

Constructing the universal cover $\Shat$, computing its full automorphism group to extract the outer generator $\alpha$, and building the explicit semidirect 
product $A$ can be computationally expensive. To mitigate this, our computational framework utilizes the Library Shortcut (Theorem \ref{thm:atajo-S}). 
By accessing precomputed character tables from the \texttt{CTblLib} package in GAP, expensive group-theoretic constructions are replaced with linear algebra operations over character tables. 

When the character tables for $\Shat$ and a suitable index-2 extension $H$ are available, along with the corresponding class fusion map $\text{fus}: 
\Shat \to H$, the classification is executed directly via Algorithm \ref{algo:library}. 

\begin{algorithm}[H]
\caption{Character Classification via Library Tables}\label{algo:library}
\SetKwInOut{Input}{Input}
\SetKwInOut{Output}{Output}
\Input{Precomputed character tables $T_{\Shat}$ and $T_H$ (where $[H:\Shat]=2$), and the fusion map $\text{fus}: \Shat \hookrightarrow H$.}
\Output{Partition of $\Irr_{p'}(\Shat)$ into Type 1 singletons and Type 2 pairs.}

\For{each $\psi \in \Irr(T_H)$}{
    Restrict $\psi$ to $\Shat$ via the map $\text{fus}$\;
    \If{$[\psi_{\Shat}, \theta] = 1$ for a unique $\theta \in \Irr_{p'}(T_{\Shat})$}{
        Mark $\theta$ as \textbf{Type 1}\;
    }
    \ElseIf{$[\psi_{\Shat}, \theta_1] = 1$ and $[\psi_{\Shat}, \theta_2] = 1$ for distinct $\theta_1, \theta_2 \in \Irr_{p'}(T_{\Shat})$}{
        Mark $\theta_1$ and $\theta_2$ as \textbf{Type 2} and pair them as an orbit\;
    }
}
\end{algorithm}

As established by the Frattini argument, this algorithm is applied identically to the normalizer side whenever the library provides the character tables 
for $\N_{\Shat}(P)$ and its corresponding index-2 extension $\N_H(P)$, along with their fusion map.

Hereafter, both in the text and in the implementation, this method of determining the type of each character will be referred to as the \textbf{Library Path}, and shall be used whenever possible.

\section{Computational Framework and Algorithms}
\label{sec:computational_framework}

We provide algorithms to construct the bijection. The approach avoids building groups where possible, falling back to algebraic constructions when library 
tables are absent. 

\subsection{The Block-and-Match Bijection}

After classifying both the $\Shat$ and $\N_{\Shat}(P)$ sides, Algorithm \ref{algo:block_match} constructs the bijection. 

To satisfy Condition (C3), characters are blocked by their central behavior. For Type 1, this is $\lambda_\chi$. For Type 2 orbits, observation 
shows that $\chi^\alpha$ lies over $\lambda_\chi^\alpha$. Thus, the multiset $\{\lambda_\chi, \lambda_{\chi^\alpha}\}$ forms an $\alpha$-orbit 
on $\Irr(\Z(\Shat))$. To satisfy Condition (C4), these blocks are refined into sub-blocks based on $C(\chi)_p$. 

Finally, a matching resolves Condition (C2) by pairing characters positionally after sorting by descending degree. For Type 2 orbits, the orientation of the mapping is resolved strictly so that the characters with identical central characters are mapped to one another.

\begin{algorithm}[H]
\caption{Block-and-Match Bijection}\label{algo:block_match}
\SetKwInOut{Input}{Input}
\SetKwInOut{Output}{Output}
\Input{Classified $p'$-characters of $\Shat$ and $\N_{\Shat}(P)$ with degrees, types, $\lambda$, and $C(\chi)_p$.}
\Output{Explicit bijection ${}^*$ satisfying (C1)--(C4).}

\For{$t \in \{1, 2\}$}{
    \For{each valid central coordinate $c$ (unique $\lambda$ for $t=1$, or unordered pair $\{\lambda_1, \lambda_2\}$ for $t=2$)}{
        \For{each valid conductor $p$-part $v$}{
            Let $\mathcal{B}_{\Shat}$ be the set of characters in $\Shat$ matching type $t$, central coordinate $c$, and conductor $v$\;
            Let $\mathcal{B}_{\N}$ be the set of characters in $\N_{\Shat}(P)$ matching $t$, $c$, and $v$\;
            \If{$|\mathcal{B}_{\Shat}| \neq |\mathcal{B}_{\N}|$}{
                Report non-existence of the bijection\;
            }
            Order the elements of $\mathcal{B}_{\Shat}$ and $\mathcal{B}_{\N}$ in descending sequence by character degree\;
            \For{$k=1$ \KwTo $|\mathcal{B}_{\Shat}|$}{
                Map the orbit $\mathcal{O}_{\Shat}^{(k)} \mapsto \mathcal{O}_{\N}^{(k)}$\;
                \If{$t = 2$}{
                    Orient the mapping $\chi \mapsto \chi^*$ within the orbit such that $\lambda_\chi = \lambda_{\chi^*}$\;
                }
                \If{$\chi^*(1) > \chi(1)$ for any $\chi \in \mathcal{O}_{\Shat}^{(k)}$}{
                    Report a violation of condition (C2)\;
                }
            }
        }
    }
}
\end{algorithm}

\begin{theorem}
\label{thm:biyeccion_correcta}
For a given sporadic group $S$, Problem \ref{prob:mckay} is positive if and only if the algorithm terminates without reporting a violation.
\end{theorem}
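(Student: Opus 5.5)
We prove both directions, using the reduction of (C1) to orbit preservation (Corollary \ref{cor:un-alpha} and the Type 1/Type 2 discussion after it). Throughout, "the algorithm" means Algorithm \ref{algo:block_match} applied to the partitions produced by Algorithm \ref{algo:standard_path} or Algorithm \ref{algo:library}. These partitions are correct by Theorem \ref{thm:atajo-S} and Proposition \ref{prop:independence_sylow}.

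\emph{Sufficiency.} Suppose no violation is reported. First we check that the blocks partition the data. Every $\chi\in\Irr_{p'}(\Shat)$ has a well-defined type, a central coordinate, and a conductor $p$-part. The conductor is $\alpha$-invariant, since $\mathbb{Q}(\chi^\alpha)=\mathbb{Q}(\chi)$. So each Type 2 orbit lies in a single sub-block, and the central coordinate $\{\lambda_\chi,\lambda_\chi^\alpha\}$ is an orbit invariant.

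Blocks have equal sizes, and the positional matching is orbit-to-orbit, so ${}^*$ is a bijection that sends $\langle\alpha\rangle$-orbits to orbits of the same type. By the remark following Corollary \ref{cor:un-alpha}, this gives (C1), provided the orientation inside Type 2 orbits is $\alpha$-compatible. If $\chi\mapsto\psi$, then $\chi^\alpha\mapsto\psi^\alpha$, which is exactly $(\chi^\alpha)^*=(\chi^*)^\alpha$.

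For (C3), Type 1 blocks share $\lambda$, and Type 2 orbits are oriented so that $\lambda_\chi=\lambda_{\chi^*}$. We must show this orientation is possible and consistent. If $\lambda_\chi\ne\lambda_\chi^\alpha$, the orientation is forced and automatically $\alpha$-compatible. If $\lambda_\chi=\lambda_\chi^\alpha$, either orientation works. (C4) holds by the sub-block refinement. (C2) holds because the absence of a reported violation means it was checked for every pair.

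\emph{Necessity.} This is the main obstacle. Suppose some bijection ${}^\dagger$ satisfying (C1)--(C4) exists. We must show that the algorithm's specific choices never fail.

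First, ${}^\dagger$ preserves type, because it maps orbits to orbits. It preserves central coordinate by (C3), and conductor $p$-part by (C4). Hence it restricts to a bijection between corresponding blocks $\mathcal{B}_{\Shat}$ and $\mathcal{B}_{\N}$, so the cardinalities agree and no non-existence is reported.

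Second, we show the descending-degree positional matching satisfies (C2) whenever some degree-non-increasing bijection of the block exists. This is the standard exchange (majorization) argument. Let $a_1\ge\dots\ge a_m$ be the orbit degrees on the $\Shat$ side and $b_1\ge\dots\ge b_m$ those on the normalizer side; within an orbit all degrees are equal. Suppose a bijection $\tau$ with $b_{\tau(k)}\le a_k$ exists. For each $k$, the $k$ largest $a$'s are matched to $k$ distinct $b$'s, all bounded by $a_k$. So at least $m-k+1$ of the $b$'s are $\le a_k$, which forces $b_k\le a_k$. Hence $b_k\le a_k$ for all $k$.

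Finally, orientation inside Type 2 orbits does not affect degrees, so no (C2) violation is reported. Ties in the sort are harmless, since equal-degree orbits are interchangeable.

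One subtlety needs care: the Type 2 central coordinate must be treated as an unordered pair. Then the assertion that $\chi^\alpha$ lies over $\lambda_\chi^\alpha$ is justified, because $(\chi^\alpha)_{\Z(\Shat)}=(\chi_{\Z(\Shat)})^\alpha$ and $\Z(\Shat)$ is characteristic.
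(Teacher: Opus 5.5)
Your proposal is correct. Your sufficiency direction is the paper's whole proof, with some details the paper leaves implicit filled in:
\begin{itemize}
\item equal block sizes give a bijection;
\item type-preserving orbit-to-orbit matching gives (C1);
\item the explicit degree check gives (C2);
\item blocking by central coordinate plus the orientation step gives (C3);
\item the conductor sub-blocks give (C4).
\end{itemize}
The added details are that $(\chi^\alpha)_{\Z(\Shat)}=(\chi_{\Z(\Shat)})^\alpha$, which the paper only attributes to ``observation''; that $C(\chi)_p$ is constant on $\langle\alpha\rangle$-orbits; and that the required orientation always exists, because paired orbits carry the same unordered pair $\{\lambda,\lambda^\alpha\}$.

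The real difference is the necessity direction, which the paper's proof does not address at all. The paper only shows that a run with no reported violation yields a valid bijection. You prove the converse. First, any bijection satisfying (C1)--(C4) respects the blocks given by type, central coordinate and $C(\cdot)_p$, so the cardinality test cannot fail. Second, within a block, descending-degree positional matching is optimal for (C2). Only with this converse does a reported violation certify that Problem \ref{prob:mckay} has a negative answer. So your version proves the stated equivalence, whereas the paper's argument establishes only one implication.

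One sentence in your exchange argument is wrong as written. The images of $a_1,\dots,a_k$ are bounded by $a_1,\dots,a_k$ respectively, not by $a_k$, so they do not produce $m-k+1$ values of $b$ below $a_k$. Use the $m-k+1$ smallest values instead. The values $a_k,\dots,a_m$ are sent to $m-k+1$ distinct $b$'s, each at most $a_j\le a_k$. Since the $b$'s are sorted in descending order, this forces $b_k\le a_k$. Alternatively, argue from the other side: $b_1,\dots,b_k$ have $k$ distinct preimages, each at least $b_k$, so $a_k\ge b_k$. With this fix your proof is complete.
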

\begin{proof}
The map ${}^*$ is a bijection, since $|\mathcal{B}_{\Shat}| = |\mathcal{B}_{\N}|$ is checked for all respective sub-blocks, and it pairs elements 
bijectively within each partition.

\textbf{Condition (C1):} By Corollary \ref{cor:un-alpha} and Section \ref{sec:clifford}, equivariance is equivalent to preserving types and mapping 
$\langle \alpha \rangle$-orbits to $\langle \alpha \rangle$-orbits. The algorithm explicitly partitions characters by type ($t \in \{1, 2\}$) and maps Type 1 
singletons to Type 1 singletons, and Type 2 pairs to Type 2 pairs. Hence, (C1) is satisfied.

\textbf{Condition (C2):} In Step 11, characters within each sub-block are ordered by descending degree. Step 15 explicitly verifies that $\chi^*(1) \le \chi(1)$. 
Since the algorithm terminates without reporting a violation, (C2) is strictly maintained.

\textbf{Condition (C3):} Characters are partitioned into blocks based on their central character behavior $c$. For Type 1 characters, $c = \lambda_\chi$. 
For Type 2 characters, $c = \{\lambda_\chi, \lambda_{\chi^\alpha}\}$. Step 14 ensures that within a Type 2 orbit, the mapping is oriented such that 
$\lambda_\chi = \lambda_{\chi^*}$. Consequently, for all $\chi$, the associated linear character $\lambda$ is preserved, satisfying (C3).

\textbf{Condition (C4):} Characters are further partitioned by the $p$-part of the conductor, $v = C(\chi)_p$. Characters are only paired if they belong 
to the same sub-block, meaning $C(\chi)_p = C(\chi^*)_p$ by definition. This satisfies (C4).
\end{proof}

\subsection{Planning and Resource Hoisting}

A direct approach iterating through primes and computing automorphisms sequentially is inefficient. Algorithm \ref{algo:plan} generates a plan per prime, assigning 
either the \texttt{library} route or the \texttt{standard} group-theoretic route. 

A critical subtlety arises regarding the central characters (Condition C3). To accurately compare the linear characters $\lambda_\chi$ and $\lambda_{\chi^*}$ of $\Z(\Shat)$, the computations for $\Shat$ and $\N_{\Shat}(P)$ must agree on the fixed generator of the cyclic center group $\Z(\Shat)$. The \textbf{Standard Path} constructs a minimal generating set, which may arbitrarily select $z^k$, whereas the \textbf{Library Path} systematically selects the lowest-indexed central class of order $m(S)$. If the Euler totient function satisfies $\varphi(m(S)) > 1$ (i.e., $m(S) \in \{3,4,6,12\}$), this discrepancy risks shifting the indices by an automorphism of $\mathbb{Z}/m(S)$. To prevent this misalignment, Algorithm \ref{algo:plan} enforces a homogeneous plan when $\varphi(m(S)) > 1$, coercing both computations to the \textbf{Standard Path}.

\begin{algorithm}[H]
\caption{Compute Plan and Hoist Resources}\label{algo:plan}
\SetKwInOut{Input}{Input}
\SetKwInOut{Output}{Output}
\Input{A sporadic simple group $S$, its Schur multiplier $m(S)$, and a set of primes $\mathcal{P}$.}
\Output{An execution plan and hoisted global resources.}

\For{each prime $p \in \mathcal{P}$}{
    \eIf{extension tables for $\Shat$ and $\N_{\Shat}(P)$ exist in \normalfont\texttt{CTblLib}}{
        Assign the \texttt{library} route to both $\Shat$ and $\N_{\Shat}(P)$\;
    }{
        Assign the \texttt{standard} route to whichever side lacks tables\;
    }
    \If{the assigned routes are mixed and $\varphi(m(S)) > 1$}{
        Reassign both sides to the \texttt{standard} route to preserve central character generator consistency\;
    }
}
\If{any prime requires the \textnormal{\texttt{standard}} route}{
    Construct the group $\Shat$, $\Aut(\Shat)$, and $\Z(\Shat)$\;
}
\If{any prime utilizes the \textnormal{\texttt{library}} route}{
    Load the necessary character tables for $\Shat$ and its extensions from \texttt{CTblLib}\;
}
Construct the canonical cyclic table $t_Z$ of order $m(S)$ globally\;
\end{algorithm}

\subsection{GAP API Specification}

Although this paper focuses primarily on sporadic groups, the implemented computational framework only requires the simple group to have an outer automorphism group of order at most $2$. To demonstrate this broader applicability, the program includes support for several non-sporadic simple groups meeting this condition (such as alternating groups up to $\mathsf{A}_{19}$, excluding $A_6$ which has $\Out(A_6) = C_2 \times C_2$), which can be evaluated seamlessly using the same API.

The main API function for calling the GAP program follows this structure:
\begin{lstlisting}[language=GAP]
Classify( name_S, prime_list, clifford_verification, show_progress, show_tables )
\end{lstlisting}

The parameters are defined as follows:
\begin{itemize}
    \item \texttt{name\_S}: ATLAS name of the simple group $S$ from the files SporadicData.g or NonSporadicData.g (e.g., \texttt{"HS"}, \texttt{"J2"}, \texttt{"A5"}).
    \item \texttt{prime\_list}: List of primes to classify (e.g., \texttt{[2, 3, 5]}).
    \item \texttt{clifford\_verification}: If \texttt{true} and a prime takes the \textbf{Standard Path}, explicitly build $A$ and verify Clifford witnesses (Algorithm \ref{algo:verification}).
    \item \texttt{show\_progress}: Print step-by-step progress messages.
    \item \texttt{show\_tables}: Display the character tables of $\Shat$, $\Z(\Shat)$, and $\N_{\Shat}(P)$.
\end{itemize}

\subsection{Program Output Structure}

The complete output generated by the program is divided into the following sequential sections:
\begin{enumerate}
    \item \textbf{$\Shat$ and $\Z(\Shat)$ Tables:} Optional display of the character table of $\Shat$ and $\Z(\Shat)$ (if \texttt{show\_tables = true}).
    \item \textbf{Header:} Displays the group name, universal cover group name, Schur multiplier, $|\Out(S)|$, the dispatched routes (\texttt{via\_S}, \texttt{via\_N}, \texttt{verify\_S}, \texttt{verify\_N}), the verification status of each side, the orders of the GAP groups built (if any) and the tables used.
    \item \textbf{Classification for $\Shat$:} Details the classification of $\Irr_{p'}(\Shat)$ and the Clifford verification status.
    \item \textbf{Normalizer Table:} Optional display of the character table of $\N_{\Shat}(P)$ (if \texttt{show\_tables = true}).
    \item \textbf{Classification for the Normalizer:} Details the classification of $\Irr_{p'}(\N_{\Shat}(P))$ and the Clifford verification status.
    \item \textbf{Bijection and Audit:} Presents the bijection and the four-condition audit.
\end{enumerate}

For type-2 pairings whose central characters $\alpha$-orbit has size 2 (i.e., $\alpha$ swaps the lambdas), the output renders the orientation explicitly:

\begin{lstlisting}[language=GAP]
{chi_S[6]@lam=2, chi_S[8]@lam=3} (degree 7) -->
{chi_N[9]@lam=2, chi_N[5]@lam=3} (degree 1)
[case 2, lambdas={2,3}, C_p(S)=4, C_p(N)=4]
\end{lstlisting}

This makes visible which character of the $\Shat$-orbit goes to which character of the $N$-orbit, beyond the unordered pair.

\subsection{Program Overview}

The following is the bird's-eye view of one call to \texttt{Classify}.

\begin{algorithm}[H]
\caption{Program Execution Flow of \texttt{Classify}}\label{algo:flow}
\textbf{1. Plan per prime}: For each $p \in \texttt{prime\_list}$, \texttt{ComputePlanForPrime} decides $\texttt{via\_S} \in \{\text{library}, \text{standard}\}$, $\texttt{via\_N} \in \{\text{library}, \text{standard}\}$ and the verification mode\;
\textbf{2. Hoisted resources}: \texttt{PrecomputeResources} aggregates the plans, then computes \emph{once} only the resources needed by at least one prime: $\Shat$, $\Aut(\Shat)$, $\Inn(\Shat)$, $\Z(\Shat)$, $\mathrm{fus}_Z$, library tables and library $\mathrm{fus}_S$\;
\textbf{3. Per-prime execution}: For each $p$, \texttt{ExecutePrime} dispatches the two sides to the \textbf{Library Path} or \textbf{Standard Path}, optionally verifies via Clifford theory, and produces $\texttt{class\_S}$ and $\texttt{class\_N}$\;
\textbf{4. Bijection and audit}: \texttt{BuildBijection} assembles ${}^{*}$ following the recipe of Theorem \ref{thm:biyeccion_correcta}; \texttt{VerifyMcKayConditions} audits (C1)--(C4)\;
\textbf{5. Output}: \texttt{PrintPrimeResults} prints classification, verification and bijection per prime\;
\end{algorithm}

\subsection{Character Table Library and Nomenclature}

It should be noted that all precomputed character tables utilized in the \texttt{library} method are sourced from the GAP Character Table Library (\texttt{CTblLib}). Within the ATLAS and \texttt{CTblLib} naming conventions, direct products are explicitly denoted with the symbol \texttt{x}. Consequently, a table designated with a \texttt{.2} suffix, such as \texttt{2.M12.2} or \texttt{6.Suz.2}, represents a genuine, non-trivial index-2 extension of $\Shat$, which is the one we need in Theorem \ref{thm:atajo-S}, rather than a trivial direct product $\Shat \times C_2$.

Furthermore, while the \textbf{Library Shortcut} provides significant computational advantages, not all necessary character tables for the universal covers, their index-2 extensions, and their corresponding Sylow normalizers are currently available in \texttt{CTblLib}. This limitation is precisely what necessitates the implementation of the \textbf{Standard Path} in our algorithmic framework to compute the missing cases.

\section{Computer specifications}
\label{sec:PC_specifications}

All results presented were computed using a personal PC with the following specifications:

\begin{table}[h!]
\centering
\begin{tabular}{ll}
\toprule
\textbf{Component} & \textbf{Specification} \\
\midrule
\textbf{Processor} & Intel Core i7-10750H @ 2.60GHz \\
\textbf{Installed RAM} & 16.0 GB \\
\textbf{Graphics Card} & NVIDIA GeForce RTX 2060 with Max-Q Design (6 GB) \\
 & Intel(R) UHD Graphics (128 MB) \\
\textbf{System Type} & 64-bit operating system, x64-based processor \\
\bottomrule
\end{tabular}
\caption{System Specifications}
\label{PC_specifications}
\end{table}

As discussed in Sections \ref{sec:inductive-condition} and \ref{sec:clifford}, taking the \textbf{Standard Path} (Algorithm \ref{algo:standard_path}) or performing the explicit Clifford verification (Algorithm \ref{algo:verification}) carries a significant computational burden. The primary bottleneck in these executions within GAP lies in the computation of the full automorphism group $\Aut(\Shat)$, the set-stabilizer $\Aut(\Shat)_P$, and the inner automorphisms $\Inn(\Shat)_P$. These specific calculations are not only the most time-consuming steps but also the most intensive in terms of memory consumption, which is the primary cause of out-of-memory errors during execution. Additionally, another clear limitation is the need to explicitly construct the groups, which in the largest cases is directly unfeasible.

It is not necessary to examine the largest sporadic groups to encounter these hardware limitations. For example, even for a relatively small group such as the Janko group $J_2$ (whose universal cover $2.J_2$ has order $1,209,600$), the computation using the Standard Path for the Sylow normalizer side takes over two minutes (135 seconds) to complete. For slightly larger but still relatively small groups, such as the Higman--Sims group $\mathrm{HS}$ (whose universal cover $2.\mathrm{HS}$ has order $88,704,000$), this same operation completely exhausts the available memory and causes the execution to crash before completion (as happened when computing the $p=5$ case). In stark contrast, when the \textbf{Library Path} is available, the execution time for groups of similar or much larger size (such as $M_{24}$ or $Co_3$) is reduced to a fraction of a second.

\section*{Acknowledgments}

I would like to express my deepest gratitude to J. Miquel Martinez, not only for agreeing to supervise the collaboration project upon which this entire work is built, but also for his invaluable guidance in writing and publishing these results. I am profoundly grateful for the hours he dedicated to this endeavor. Additionally, I would like to thank G. Navarro for his encouragement in publishing these findings on the arXiv.

\section*{Declaration on the Use of AI}
In accordance with the guidelines of the International Mathematical Union (IMU), the author acknowledges the use of artificial intelligence tools as an assistant for writing and debugging the computational code utilized in this work. The theoretical results, their proofs, and the entirety of the text were written exclusively by the author. The author assumes full responsibility for the content, accuracy, and integrity of this paper.

\newpage
\appendix

\section{Example of an explicit bijection produced}

Below is an example of an explicit classification of irreducible characters and the corresponding bijection for the sporadic group $\mathrm{Fi}_{24}'$ and the prime $p=5$.

\label{app:bijection_example}

\begin{lstlisting}[language=GAP]
########################################################
### T2.16 | Fi24' | primes=[ 5 ] | clifford=true
########################################################

========================================================
=== PRIME p = 5
========================================================
Simple group   = Fi24'
Cover S_hat    = 3.Fi24'
Schur multiplier = 3
|Out(S)| = 2
Route, S_hat side       : library
Route, N_{S_hat}(P) side: library
Verification, S_hat     : implicit
Verification, N         : implicit
Table used for S_hat    : 3.F3+
Table for the S_hat extension : 3.F3+.2
Table for N_{S_hat}(P)  : 3.F3+N5 (library)
Table for N_A(P)        : 3.F3+.2N5

--- CLASSIFICATION OF Irr_{p'}(S_hat) ---
chi[1] degree 1, conductor 1: case 1, lambda=1
chi[2] degree 8671, conductor 1: case 1, lambda=1
chi[3] degree 57477, conductor 1: case 1, lambda=1
chi[4] degree 249458, conductor 1: case 1, lambda=1
chi[5] degree 555611, conductor 1: case 1, lambda=1
chi[8] degree 1666833, conductor 1: case 1, lambda=1
chi[9] degree 4864431, conductor 1: case 1, lambda=1
chi[10] degree 32715683, conductor 1: case 1, lambda=1
chi[13] degree 48893768, conductor 1: case 1, lambda=1
chi[16] degree 79452373, conductor 1: case 1, lambda=1
chi[17] degree 112168056, conductor 1: case 1, lambda=1
chi[19] degree 281380736, conductor 1: case 1, lambda=1
chi[20] degree 415098112, conductor 1: case 1, lambda=1
chi[21] degree 635618984, conductor 1: case 1, lambda=1
chi[23] degree 1112333222, conductor 1: case 1, lambda=1
chi[25] degree 1337276304, conductor 1: case 1, lambda=1
chi[26] degree 1540153692, conductor 1: case 1, lambda=1
chi[27] degree 2346900864, conductor 1: case 1, lambda=1
chi[31] degree 5005499499, conductor 1: case 1, lambda=1
chi[33] degree 6471756928, conductor 1: case 1, lambda=1
chi[35] degree 8529641472, conductor 1: case 1, lambda=1
chi[38] degree 10169903744, conductor 1: case 1, lambda=1
chi[45] degree 17161712568, conductor 1: case 1, lambda=1
chi[46] degree 18481844304, conductor 1: case 2, partner chi[47], lambda=1
chi[47] degree 18481844304, conductor 1: case 2, partner chi[46], lambda=1
chi[51] degree 35594663104, conductor 1: case 1, lambda=1
chi[52] degree 36858678129, conductor 1: case 1, lambda=1
chi[54] degree 38641860608, conductor 1: case 1, lambda=1
chi[55] degree 40043995992, conductor 1: case 1, lambda=1
chi[57] degree 45049495491, conductor 1: case 1, lambda=1
chi[59] degree 54234085491, conductor 1: case 1, lambda=1
chi[63] degree 63831063582, conductor 1: case 1, lambda=1
chi[64] degree 65393917952, conductor 1: case 2, partner chi[65], lambda=1
chi[65] degree 65393917952, conductor 1: case 2, partner chi[64], lambda=1
chi[66] degree 67331776512, conductor 1: case 1, lambda=1
chi[67] degree 71189326208, conductor 1: case 1, lambda=1
chi[68] degree 74887473024, conductor 1: case 1, lambda=1
chi[71] degree 77108871168, conductor 1: case 1, lambda=1
chi[76] degree 118588933386, conductor 1: case 1, lambda=1
chi[77] degree 132390354096, conductor 21: case 2, partner chi[78], lambda=1
chi[78] degree 132390354096, conductor 21: case 2, partner chi[77], lambda=1
chi[79] degree 139317477376, conductor 1: case 1, lambda=1
chi[82] degree 142169187069, conductor 1: case 1, lambda=1
chi[83] degree 142378652416, conductor 1: case 1, lambda=1
chi[84] degree 145650089984, conductor 1: case 1, lambda=1
chi[85] degree 150201655296, conductor 1: case 1, lambda=1
chi[88] degree 156321775827, conductor 1: case 1, lambda=1
chi[89] degree 156321775827, conductor 1: case 1, lambda=1
chi[91] degree 164572397352, conductor 29: case 2, partner chi[92], lambda=1
chi[92] degree 164572397352, conductor 29: case 2, partner chi[91], lambda=1
chi[93] degree 169598100672, conductor 1: case 1, lambda=1
chi[94] degree 178514751987, conductor 1: case 1, lambda=1
chi[95] degree 184117100544, conductor 1: case 1, lambda=1
chi[103] degree 205940550816, conductor 1: case 1, lambda=1
chi[104] degree 222758961152, conductor 1: case 1, lambda=1
chi[107] degree 282049015248, conductor 1: case 1, lambda=1
chi[109] degree 783, conductor 3: case 2, partner chi[110], lambda=2
chi[110] degree 783, conductor 3: case 2, partner chi[109], lambda=3
chi[111] degree 64584, conductor 3: case 2, partner chi[112], lambda=2
chi[112] degree 64584, conductor 3: case 2, partner chi[111], lambda=3
chi[113] degree 306153, conductor 3: case 2, partner chi[114], lambda=2
chi[114] degree 306153, conductor 3: case 2, partner chi[113], lambda=3
chi[115] degree 306153, conductor 3: case 2, partner chi[116], lambda=2
chi[116] degree 306153, conductor 3: case 2, partner chi[115], lambda=3
chi[117] degree 6724809, conductor 3: case 2, partner chi[118], lambda=2
chi[118] degree 6724809, conductor 3: case 2, partner chi[117], lambda=3
chi[121] degree 25356672, conductor 3: case 2, partner chi[122], lambda=2
chi[122] degree 25356672, conductor 3: case 2, partner chi[121], lambda=3
chi[123] degree 43779879, conductor 3: case 2, partner chi[124], lambda=2
chi[124] degree 43779879, conductor 3: case 2, partner chi[123], lambda=3
chi[125] degree 195019461, conductor 3: case 2, partner chi[126], lambda=2
chi[126] degree 195019461, conductor 3: case 2, partner chi[125], lambda=3
chi[127] degree 195019461, conductor 3: case 2, partner chi[128], lambda=2
chi[128] degree 195019461, conductor 3: case 2, partner chi[127], lambda=3
chi[129] degree 203843871, conductor 3: case 2, partner chi[130], lambda=2
chi[130] degree 203843871, conductor 3: case 2, partner chi[129], lambda=3
chi[135] degree 330032934, conductor 3: case 2, partner chi[136], lambda=2
chi[136] degree 330032934, conductor 3: case 2, partner chi[135], lambda=3
chi[137] degree 1050717096, conductor 3: case 2, partner chi[138], lambda=2
chi[138] degree 1050717096, conductor 3: case 2, partner chi[137], lambda=3
chi[147] degree 1349587008, conductor 12: case 2, partner chi[148], lambda=2
chi[148] degree 1349587008, conductor 12: case 2, partner chi[147], lambda=3
chi[149] degree 1349587008, conductor 12: case 2, partner chi[150], lambda=2
chi[150] degree 1349587008, conductor 12: case 2, partner chi[149], lambda=3
chi[151] degree 1553430879, conductor 3: case 2, partner chi[152], lambda=2
chi[152] degree 1553430879, conductor 3: case 2, partner chi[151], lambda=3
chi[157] degree 2801912256, conductor 3: case 2, partner chi[158], lambda=2
chi[158] degree 2801912256, conductor 3: case 2, partner chi[157], lambda=3
chi[161] degree 4290428142, conductor 3: case 2, partner chi[162], lambda=2
chi[162] degree 4290428142, conductor 3: case 2, partner chi[161], lambda=3
chi[165] degree 4620461076, conductor 3: case 2, partner chi[166], lambda=2
chi[166] degree 4620461076, conductor 3: case 2, partner chi[165], lambda=3
chi[175] degree 9456453864, conductor 3: case 2, partner chi[176], lambda=2
chi[176] degree 9456453864, conductor 3: case 2, partner chi[175], lambda=3
chi[183] degree 15016498497, conductor 3: case 2, partner chi[184], lambda=2
chi[184] degree 15016498497, conductor 3: case 2, partner chi[183], lambda=3
chi[187] degree 21096751104, conductor 3: case 2, partner chi[188], lambda=2
chi[188] degree 21096751104, conductor 3: case 2, partner chi[187], lambda=3
chi[189] degree 21122107776, conductor 3: case 2, partner chi[190], lambda=2
chi[190] degree 21122107776, conductor 3: case 2, partner chi[189], lambda=3
chi[191] degree 21842179632, conductor 3: case 2, partner chi[192], lambda=2
chi[192] degree 21842179632, conductor 3: case 2, partner chi[191], lambda=3
chi[197] degree 41785039296, conductor 3: case 2, partner chi[198], lambda=2
chi[198] degree 41785039296, conductor 3: case 2, partner chi[197], lambda=3
chi[199] degree 63831063582, conductor 3: case 2, partner chi[200], lambda=2
chi[200] degree 63831063582, conductor 3: case 2, partner chi[199], lambda=3
chi[203] degree 80256172032, conductor 3: case 2, partner chi[204], lambda=2
chi[204] degree 80256172032, conductor 3: case 2, partner chi[203], lambda=3
chi[205] degree 80256172032, conductor 3: case 2, partner chi[206], lambda=2
chi[206] degree 80256172032, conductor 3: case 2, partner chi[205], lambda=3
chi[213] degree 120131987976, conductor 3: case 2, partner chi[214], lambda=2
chi[214] degree 120131987976, conductor 3: case 2, partner chi[213], lambda=3
chi[215] degree 135605256192, conductor 3: case 2, partner chi[216], lambda=2
chi[216] degree 135605256192, conductor 3: case 2, partner chi[215], lambda=3
chi[217] degree 135605256192, conductor 3: case 2, partner chi[218], lambda=2
chi[218] degree 135605256192, conductor 3: case 2, partner chi[217], lambda=3
chi[219] degree 142169187069, conductor 3: case 2, partner chi[220], lambda=2
chi[220] degree 142169187069, conductor 3: case 2, partner chi[219], lambda=3
chi[227] degree 154455413112, conductor 3: case 2, partner chi[228], lambda=2
chi[228] degree 154455413112, conductor 3: case 2, partner chi[227], lambda=3
chi[231] degree 178514751987, conductor 3: case 2, partner chi[232], lambda=2
chi[232] degree 178514751987, conductor 3: case 2, partner chi[231], lambda=3
chi[237] degree 210555861318, conductor 3: case 2, partner chi[238], lambda=2
chi[238] degree 210555861318, conductor 3: case 2, partner chi[237], lambda=3
chi[239] degree 215861428224, conductor 3: case 2, partner chi[240], lambda=2
chi[240] degree 215861428224, conductor 3: case 2, partner chi[239], lambda=3
chi[245] degree 274587401088, conductor 3: case 2, partner chi[246], lambda=2
chi[246] degree 274587401088, conductor 3: case 2, partner chi[245], lambda=3
chi[247] degree 274910709213, conductor 3: case 2, partner chi[248], lambda=2
chi[248] degree 274910709213, conductor 3: case 2, partner chi[247], lambda=3
chi[249] degree 295709508864, conductor 3: case 2, partner chi[250], lambda=2
chi[250] degree 295709508864, conductor 3: case 2, partner chi[249], lambda=3
chi[251] degree 358644768768, conductor 3: case 2, partner chi[252], lambda=2
chi[252] degree 358644768768, conductor 3: case 2, partner chi[251], lambda=3
chi[255] degree 405445459419, conductor 3: case 2, partner chi[256], lambda=2
chi[256] degree 405445459419, conductor 3: case 2, partner chi[255], lambda=3
Clifford verification S_hat: implicit (classification via library tables).

--- CLASSIFICATION OF Irr_{p'}(N_S_hat(P)) ---
chi[1] degree 1, conductor 1: case 1, lambda=1
chi[2] degree 1, conductor 1: case 1, lambda=1
chi[3] degree 1, conductor 4: case 1, lambda=1
chi[4] degree 1, conductor 4: case 1, lambda=1
chi[5] degree 2, conductor 1: case 1, lambda=1
chi[6] degree 2, conductor 1: case 1, lambda=1
chi[7] degree 2, conductor 1: case 1, lambda=1
chi[8] degree 2, conductor 1: case 1, lambda=1
chi[9] degree 2, conductor 1: case 2, partner chi[10], lambda=1
chi[10] degree 2, conductor 1: case 2, partner chi[9], lambda=1
chi[11] degree 2, conductor 1: case 2, partner chi[12], lambda=1
chi[12] degree 2, conductor 1: case 2, partner chi[11], lambda=1
chi[13] degree 3, conductor 1: case 1, lambda=1
chi[14] degree 3, conductor 1: case 1, lambda=1
chi[15] degree 3, conductor 1: case 1, lambda=1
chi[16] degree 3, conductor 1: case 1, lambda=1
chi[17] degree 3, conductor 12: case 2, partner chi[19], lambda=3
chi[18] degree 3, conductor 12: case 2, partner chi[20], lambda=3
chi[19] degree 3, conductor 12: case 2, partner chi[17], lambda=2
chi[20] degree 3, conductor 12: case 2, partner chi[18], lambda=2
chi[21] degree 3, conductor 3: case 2, partner chi[22], lambda=3
chi[22] degree 3, conductor 3: case 2, partner chi[21], lambda=2
chi[23] degree 3, conductor 3: case 2, partner chi[24], lambda=3
chi[24] degree 3, conductor 3: case 2, partner chi[23], lambda=2
chi[25] degree 3, conductor 4: case 1, lambda=1
chi[26] degree 3, conductor 4: case 1, lambda=1
chi[27] degree 3, conductor 4: case 1, lambda=1
chi[28] degree 3, conductor 4: case 1, lambda=1
chi[29] degree 3, conductor 3: case 2, partner chi[30], lambda=3
chi[30] degree 3, conductor 3: case 2, partner chi[29], lambda=2
chi[31] degree 3, conductor 3: case 2, partner chi[32], lambda=3
chi[32] degree 3, conductor 3: case 2, partner chi[31], lambda=2
chi[33] degree 3, conductor 12: case 2, partner chi[35], lambda=3
chi[34] degree 3, conductor 12: case 2, partner chi[36], lambda=3
chi[35] degree 3, conductor 12: case 2, partner chi[33], lambda=2
chi[36] degree 3, conductor 12: case 2, partner chi[34], lambda=2
chi[37] degree 3, conductor 12: case 2, partner chi[39], lambda=3
chi[38] degree 3, conductor 12: case 2, partner chi[40], lambda=3
chi[39] degree 3, conductor 12: case 2, partner chi[37], lambda=2
chi[40] degree 3, conductor 12: case 2, partner chi[38], lambda=2
chi[41] degree 3, conductor 3: case 2, partner chi[42], lambda=3
chi[42] degree 3, conductor 3: case 2, partner chi[41], lambda=2
chi[43] degree 3, conductor 3: case 2, partner chi[44], lambda=3
chi[44] degree 3, conductor 3: case 2, partner chi[43], lambda=2
chi[45] degree 6, conductor 1: case 1, lambda=1
chi[46] degree 6, conductor 1: case 1, lambda=1
chi[47] degree 6, conductor 1: case 1, lambda=1
chi[48] degree 6, conductor 1: case 1, lambda=1
chi[49] degree 6, conductor 3: case 2, partner chi[50], lambda=3
chi[50] degree 6, conductor 3: case 2, partner chi[49], lambda=2
chi[51] degree 6, conductor 3: case 2, partner chi[52], lambda=3
chi[52] degree 6, conductor 3: case 2, partner chi[51], lambda=2
chi[53] degree 6, conductor 3: case 2, partner chi[54], lambda=3
chi[54] degree 6, conductor 3: case 2, partner chi[53], lambda=2
chi[55] degree 6, conductor 3: case 2, partner chi[56], lambda=3
chi[56] degree 6, conductor 3: case 2, partner chi[55], lambda=2
chi[57] degree 9, conductor 1: case 1, lambda=1
chi[58] degree 9, conductor 1: case 1, lambda=1
chi[59] degree 9, conductor 4: case 1, lambda=1
chi[60] degree 9, conductor 4: case 1, lambda=1
chi[61] degree 9, conductor 12: case 2, partner chi[63], lambda=3
chi[62] degree 9, conductor 12: case 2, partner chi[64], lambda=3
chi[63] degree 9, conductor 12: case 2, partner chi[61], lambda=2
chi[64] degree 9, conductor 12: case 2, partner chi[62], lambda=2
chi[65] degree 9, conductor 3: case 2, partner chi[66], lambda=3
chi[66] degree 9, conductor 3: case 2, partner chi[65], lambda=2
chi[67] degree 9, conductor 3: case 2, partner chi[68], lambda=3
chi[68] degree 9, conductor 3: case 2, partner chi[67], lambda=2
chi[69] degree 2, conductor 4: case 1, lambda=1
chi[70] degree 2, conductor 4: case 1, lambda=1
chi[71] degree 2, conductor 4: case 1, lambda=1
chi[72] degree 2, conductor 4: case 1, lambda=1
chi[73] degree 4, conductor 4: case 1, lambda=1
chi[74] degree 4, conductor 4: case 1, lambda=1
chi[75] degree 4, conductor 4: case 1, lambda=1
chi[76] degree 4, conductor 4: case 1, lambda=1
chi[77] degree 4, conductor 4: case 2, partner chi[79], lambda=1
chi[78] degree 4, conductor 4: case 2, partner chi[80], lambda=1
chi[79] degree 4, conductor 4: case 2, partner chi[77], lambda=1
chi[80] degree 4, conductor 4: case 2, partner chi[78], lambda=1
chi[81] degree 6, conductor 12: case 2, partner chi[83], lambda=3
chi[82] degree 6, conductor 12: case 2, partner chi[84], lambda=3
chi[83] degree 6, conductor 12: case 2, partner chi[81], lambda=2
chi[84] degree 6, conductor 12: case 2, partner chi[82], lambda=2
chi[85] degree 6, conductor 12: case 2, partner chi[87], lambda=3
chi[86] degree 6, conductor 12: case 2, partner chi[88], lambda=3
chi[87] degree 6, conductor 12: case 2, partner chi[85], lambda=2
chi[88] degree 6, conductor 12: case 2, partner chi[86], lambda=2
chi[89] degree 6, conductor 4: case 1, lambda=1
chi[90] degree 6, conductor 4: case 1, lambda=1
chi[91] degree 6, conductor 4: case 1, lambda=1
chi[92] degree 6, conductor 4: case 1, lambda=1
chi[93] degree 6, conductor 12: case 2, partner chi[95], lambda=3
chi[94] degree 6, conductor 12: case 2, partner chi[96], lambda=3
chi[95] degree 6, conductor 12: case 2, partner chi[93], lambda=2
chi[96] degree 6, conductor 12: case 2, partner chi[94], lambda=2
chi[97] degree 6, conductor 12: case 2, partner chi[99], lambda=3
chi[98] degree 6, conductor 12: case 2, partner chi[100], lambda=3
chi[99] degree 6, conductor 12: case 2, partner chi[97], lambda=2
chi[100] degree 6, conductor 12: case 2, partner chi[98], lambda=2
chi[101] degree 12, conductor 4: case 1, lambda=1
chi[102] degree 12, conductor 4: case 1, lambda=1
chi[103] degree 12, conductor 12: case 2, partner chi[105], lambda=3
chi[104] degree 12, conductor 12: case 2, partner chi[106], lambda=3
chi[105] degree 12, conductor 12: case 2, partner chi[103], lambda=2
chi[106] degree 12, conductor 12: case 2, partner chi[104], lambda=2
chi[107] degree 24, conductor 1: case 1, lambda=1
chi[108] degree 24, conductor 1: case 1, lambda=1
chi[109] degree 24, conductor 4: case 1, lambda=1
chi[110] degree 24, conductor 4: case 1, lambda=1
chi[111] degree 24, conductor 12: case 2, partner chi[113], lambda=3
chi[112] degree 24, conductor 12: case 2, partner chi[114], lambda=3
chi[113] degree 24, conductor 12: case 2, partner chi[111], lambda=2
chi[114] degree 24, conductor 12: case 2, partner chi[112], lambda=2
chi[115] degree 24, conductor 3: case 2, partner chi[116], lambda=3
chi[116] degree 24, conductor 3: case 2, partner chi[115], lambda=2
chi[117] degree 24, conductor 3: case 2, partner chi[118], lambda=3
chi[118] degree 24, conductor 3: case 2, partner chi[117], lambda=2
chi[119] degree 48, conductor 1: case 1, lambda=1
chi[120] degree 48, conductor 1: case 1, lambda=1
chi[121] degree 48, conductor 3: case 2, partner chi[122], lambda=3
chi[122] degree 48, conductor 3: case 2, partner chi[121], lambda=2
chi[123] degree 48, conductor 3: case 2, partner chi[124], lambda=3
chi[124] degree 48, conductor 3: case 2, partner chi[123], lambda=2
chi[125] degree 72, conductor 1: case 1, lambda=1
chi[126] degree 72, conductor 1: case 1, lambda=1
chi[127] degree 72, conductor 4: case 1, lambda=1
chi[128] degree 72, conductor 4: case 1, lambda=1
chi[129] degree 72, conductor 3: case 2, partner chi[130], lambda=3
chi[130] degree 72, conductor 3: case 2, partner chi[129], lambda=2
chi[131] degree 72, conductor 3: case 2, partner chi[132], lambda=3
chi[132] degree 72, conductor 3: case 2, partner chi[131], lambda=2
chi[133] degree 72, conductor 12: case 2, partner chi[135], lambda=3
chi[134] degree 72, conductor 12: case 2, partner chi[136], lambda=3
chi[135] degree 72, conductor 12: case 2, partner chi[133], lambda=2
chi[136] degree 72, conductor 12: case 2, partner chi[134], lambda=2
Clifford verification N_S_hat(P): implicit (classification via library tables).

--- BIJECTION Irr_{p'}(S_hat) <-> Irr_{p'}(N_S_hat(P)) ---
Case-1 orbits: 48 | Case-2 orbits: 44

  chi_S[107] (degree 282049015248) --> chi_N[125] (degree 72)  [case 1, lambda=1, C_p(S)=1, C_p(N)=1]
  chi_S[104] (degree 222758961152) --> chi_N[126] (degree 72)  [case 1, lambda=1, C_p(S)=1, C_p(N)=1]
  chi_S[103] (degree 205940550816) --> chi_N[127] (degree 72)  [case 1, lambda=1, C_p(S)=1, C_p(N)=1]
  chi_S[95] (degree 184117100544) --> chi_N[128] (degree 72)  [case 1, lambda=1, C_p(S)=1, C_p(N)=1]
  chi_S[94] (degree 178514751987) --> chi_N[119] (degree 48)  [case 1, lambda=1, C_p(S)=1, C_p(N)=1]
  chi_S[93] (degree 169598100672) --> chi_N[120] (degree 48)  [case 1, lambda=1, C_p(S)=1, C_p(N)=1]
  chi_S[88] (degree 156321775827) --> chi_N[107] (degree 24)  [case 1, lambda=1, C_p(S)=1, C_p(N)=1]
  chi_S[89] (degree 156321775827) --> chi_N[108] (degree 24)  [case 1, lambda=1, C_p(S)=1, C_p(N)=1]
  chi_S[85] (degree 150201655296) --> chi_N[109] (degree 24)  [case 1, lambda=1, C_p(S)=1, C_p(N)=1]
  chi_S[84] (degree 145650089984) --> chi_N[110] (degree 24)  [case 1, lambda=1, C_p(S)=1, C_p(N)=1]
  chi_S[83] (degree 142378652416) --> chi_N[101] (degree 12)  [case 1, lambda=1, C_p(S)=1, C_p(N)=1]
  chi_S[82] (degree 142169187069) --> chi_N[102] (degree 12)  [case 1, lambda=1, C_p(S)=1, C_p(N)=1]
  chi_S[79] (degree 139317477376) --> chi_N[57] (degree 9)  [case 1, lambda=1, C_p(S)=1, C_p(N)=1]
  chi_S[76] (degree 118588933386) --> chi_N[58] (degree 9)  [case 1, lambda=1, C_p(S)=1, C_p(N)=1]
  chi_S[71] (degree 77108871168) --> chi_N[59] (degree 9)  [case 1, lambda=1, C_p(S)=1, C_p(N)=1]
  chi_S[68] (degree 74887473024) --> chi_N[60] (degree 9)  [case 1, lambda=1, C_p(S)=1, C_p(N)=1]
  chi_S[67] (degree 71189326208) --> chi_N[45] (degree 6)  [case 1, lambda=1, C_p(S)=1, C_p(N)=1]
  chi_S[66] (degree 67331776512) --> chi_N[46] (degree 6)  [case 1, lambda=1, C_p(S)=1, C_p(N)=1]
  chi_S[63] (degree 63831063582) --> chi_N[47] (degree 6)  [case 1, lambda=1, C_p(S)=1, C_p(N)=1]
  chi_S[59] (degree 54234085491) --> chi_N[48] (degree 6)  [case 1, lambda=1, C_p(S)=1, C_p(N)=1]
  chi_S[57] (degree 45049495491) --> chi_N[89] (degree 6)  [case 1, lambda=1, C_p(S)=1, C_p(N)=1]
  chi_S[55] (degree 40043995992) --> chi_N[90] (degree 6)  [case 1, lambda=1, C_p(S)=1, C_p(N)=1]
  chi_S[54] (degree 38641860608) --> chi_N[91] (degree 6)  [case 1, lambda=1, C_p(S)=1, C_p(N)=1]
  chi_S[52] (degree 36858678129) --> chi_N[92] (degree 6)  [case 1, lambda=1, C_p(S)=1, C_p(N)=1]
  chi_S[51] (degree 35594663104) --> chi_N[73] (degree 4)  [case 1, lambda=1, C_p(S)=1, C_p(N)=1]
  chi_S[45] (degree 17161712568) --> chi_N[74] (degree 4)  [case 1, lambda=1, C_p(S)=1, C_p(N)=1]
  chi_S[38] (degree 10169903744) --> chi_N[75] (degree 4)  [case 1, lambda=1, C_p(S)=1, C_p(N)=1]
  chi_S[35] (degree 8529641472) --> chi_N[76] (degree 4)  [case 1, lambda=1, C_p(S)=1, C_p(N)=1]
  chi_S[33] (degree 6471756928) --> chi_N[13] (degree 3)  [case 1, lambda=1, C_p(S)=1, C_p(N)=1]
  chi_S[31] (degree 5005499499) --> chi_N[14] (degree 3)  [case 1, lambda=1, C_p(S)=1, C_p(N)=1]
  chi_S[27] (degree 2346900864) --> chi_N[15] (degree 3)  [case 1, lambda=1, C_p(S)=1, C_p(N)=1]
  chi_S[26] (degree 1540153692) --> chi_N[16] (degree 3)  [case 1, lambda=1, C_p(S)=1, C_p(N)=1]
  chi_S[25] (degree 1337276304) --> chi_N[25] (degree 3)  [case 1, lambda=1, C_p(S)=1, C_p(N)=1]
  chi_S[23] (degree 1112333222) --> chi_N[26] (degree 3)  [case 1, lambda=1, C_p(S)=1, C_p(N)=1]
  chi_S[21] (degree 635618984) --> chi_N[27] (degree 3)  [case 1, lambda=1, C_p(S)=1, C_p(N)=1]
  chi_S[20] (degree 415098112) --> chi_N[28] (degree 3)  [case 1, lambda=1, C_p(S)=1, C_p(N)=1]
  chi_S[19] (degree 281380736) --> chi_N[5] (degree 2)  [case 1, lambda=1, C_p(S)=1, C_p(N)=1]
  chi_S[17] (degree 112168056) --> chi_N[6] (degree 2)  [case 1, lambda=1, C_p(S)=1, C_p(N)=1]
  chi_S[16] (degree 79452373) --> chi_N[7] (degree 2)  [case 1, lambda=1, C_p(S)=1, C_p(N)=1]
  chi_S[13] (degree 48893768) --> chi_N[8] (degree 2)  [case 1, lambda=1, C_p(S)=1, C_p(N)=1]
  chi_S[10] (degree 32715683) --> chi_N[69] (degree 2)  [case 1, lambda=1, C_p(S)=1, C_p(N)=1]
  chi_S[9] (degree 4864431) --> chi_N[70] (degree 2)  [case 1, lambda=1, C_p(S)=1, C_p(N)=1]
  chi_S[8] (degree 1666833) --> chi_N[71] (degree 2)  [case 1, lambda=1, C_p(S)=1, C_p(N)=1]
  chi_S[5] (degree 555611) --> chi_N[72] (degree 2)  [case 1, lambda=1, C_p(S)=1, C_p(N)=1]
  chi_S[4] (degree 249458) --> chi_N[1] (degree 1)  [case 1, lambda=1, C_p(S)=1, C_p(N)=1]
  chi_S[3] (degree 57477) --> chi_N[2] (degree 1)  [case 1, lambda=1, C_p(S)=1, C_p(N)=1]
  chi_S[2] (degree 8671) --> chi_N[3] (degree 1)  [case 1, lambda=1, C_p(S)=1, C_p(N)=1]
  chi_S[1] (degree 1) --> chi_N[4] (degree 1)  [case 1, lambda=1, C_p(S)=1, C_p(N)=1]
  {chi_S[91], chi_S[92]} (degree 164572397352) --> {chi_N[77], chi_N[79]} (degree 4)  [case 2, lambda=1, C_p(S)=1, C_p(N)=1]
  {chi_S[77], chi_S[78]} (degree 132390354096) --> {chi_N[78], chi_N[80]} (degree 4)  [case 2, lambda=1, C_p(S)=1, C_p(N)=1]
  {chi_S[64], chi_S[65]} (degree 65393917952) --> {chi_N[9], chi_N[10]} (degree 2)  [case 2, lambda=1, C_p(S)=1, C_p(N)=1]
  {chi_S[46], chi_S[47]} (degree 18481844304) --> {chi_N[11], chi_N[12]} (degree 2)  [case 2, lambda=1, C_p(S)=1, C_p(N)=1]
  {chi_S[255]@lam=2, chi_S[256]@lam=3} (degree 405445459419) --> {chi_N[130]@lam=2, chi_N[129]@lam=3} (degree 72)  [case 2, lambdas={2,3}, C_p(S)=1, C_p(N)=1]
  {chi_S[251]@lam=2, chi_S[252]@lam=3} (degree 358644768768) --> {chi_N[132]@lam=2, chi_N[131]@lam=3} (degree 72)  [case 2, lambdas={2,3}, C_p(S)=1, C_p(N)=1]
  {chi_S[249]@lam=2, chi_S[250]@lam=3} (degree 295709508864) --> {chi_N[135]@lam=2, chi_N[133]@lam=3} (degree 72)  [case 2, lambdas={2,3}, C_p(S)=1, C_p(N)=1]
  {chi_S[247]@lam=2, chi_S[248]@lam=3} (degree 274910709213) --> {chi_N[136]@lam=2, chi_N[134]@lam=3} (degree 72)  [case 2, lambdas={2,3}, C_p(S)=1, C_p(N)=1]
  {chi_S[245]@lam=2, chi_S[246]@lam=3} (degree 274587401088) --> {chi_N[122]@lam=2, chi_N[121]@lam=3} (degree 48)  [case 2, lambdas={2,3}, C_p(S)=1, C_p(N)=1]
  {chi_S[239]@lam=2, chi_S[240]@lam=3} (degree 215861428224) --> {chi_N[124]@lam=2, chi_N[123]@lam=3} (degree 48)  [case 2, lambdas={2,3}, C_p(S)=1, C_p(N)=1]
  {chi_S[237]@lam=2, chi_S[238]@lam=3} (degree 210555861318) --> {chi_N[113]@lam=2, chi_N[111]@lam=3} (degree 24)  [case 2, lambdas={2,3}, C_p(S)=1, C_p(N)=1]
  {chi_S[231]@lam=2, chi_S[232]@lam=3} (degree 178514751987) --> {chi_N[114]@lam=2, chi_N[112]@lam=3} (degree 24)  [case 2, lambdas={2,3}, C_p(S)=1, C_p(N)=1]
  {chi_S[227]@lam=2, chi_S[228]@lam=3} (degree 154455413112) --> {chi_N[116]@lam=2, chi_N[115]@lam=3} (degree 24)  [case 2, lambdas={2,3}, C_p(S)=1, C_p(N)=1]
  {chi_S[219]@lam=2, chi_S[220]@lam=3} (degree 142169187069) --> {chi_N[118]@lam=2, chi_N[117]@lam=3} (degree 24)  [case 2, lambdas={2,3}, C_p(S)=1, C_p(N)=1]
  {chi_S[215]@lam=2, chi_S[216]@lam=3} (degree 135605256192) --> {chi_N[105]@lam=2, chi_N[103]@lam=3} (degree 12)  [case 2, lambdas={2,3}, C_p(S)=1, C_p(N)=1]
  {chi_S[217]@lam=2, chi_S[218]@lam=3} (degree 135605256192) --> {chi_N[106]@lam=2, chi_N[104]@lam=3} (degree 12)  [case 2, lambdas={2,3}, C_p(S)=1, C_p(N)=1]
  {chi_S[213]@lam=2, chi_S[214]@lam=3} (degree 120131987976) --> {chi_N[63]@lam=2, chi_N[61]@lam=3} (degree 9)  [case 2, lambdas={2,3}, C_p(S)=1, C_p(N)=1]
  {chi_S[203]@lam=2, chi_S[204]@lam=3} (degree 80256172032) --> {chi_N[64]@lam=2, chi_N[62]@lam=3} (degree 9)  [case 2, lambdas={2,3}, C_p(S)=1, C_p(N)=1]
  {chi_S[205]@lam=2, chi_S[206]@lam=3} (degree 80256172032) --> {chi_N[66]@lam=2, chi_N[65]@lam=3} (degree 9)  [case 2, lambdas={2,3}, C_p(S)=1, C_p(N)=1]
  {chi_S[199]@lam=2, chi_S[200]@lam=3} (degree 63831063582) --> {chi_N[68]@lam=2, chi_N[67]@lam=3} (degree 9)  [case 2, lambdas={2,3}, C_p(S)=1, C_p(N)=1]
  {chi_S[197]@lam=2, chi_S[198]@lam=3} (degree 41785039296) --> {chi_N[50]@lam=2, chi_N[49]@lam=3} (degree 6)  [case 2, lambdas={2,3}, C_p(S)=1, C_p(N)=1]
  {chi_S[191]@lam=2, chi_S[192]@lam=3} (degree 21842179632) --> {chi_N[52]@lam=2, chi_N[51]@lam=3} (degree 6)  [case 2, lambdas={2,3}, C_p(S)=1, C_p(N)=1]
  {chi_S[189]@lam=2, chi_S[190]@lam=3} (degree 21122107776) --> {chi_N[54]@lam=2, chi_N[53]@lam=3} (degree 6)  [case 2, lambdas={2,3}, C_p(S)=1, C_p(N)=1]
  {chi_S[187]@lam=2, chi_S[188]@lam=3} (degree 21096751104) --> {chi_N[56]@lam=2, chi_N[55]@lam=3} (degree 6)  [case 2, lambdas={2,3}, C_p(S)=1, C_p(N)=1]
  {chi_S[183]@lam=2, chi_S[184]@lam=3} (degree 15016498497) --> {chi_N[83]@lam=2, chi_N[81]@lam=3} (degree 6)  [case 2, lambdas={2,3}, C_p(S)=1, C_p(N)=1]
  {chi_S[175]@lam=2, chi_S[176]@lam=3} (degree 9456453864) --> {chi_N[84]@lam=2, chi_N[82]@lam=3} (degree 6)  [case 2, lambdas={2,3}, C_p(S)=1, C_p(N)=1]
  {chi_S[165]@lam=2, chi_S[166]@lam=3} (degree 4620461076) --> {chi_N[87]@lam=2, chi_N[85]@lam=3} (degree 6)  [case 2, lambdas={2,3}, C_p(S)=1, C_p(N)=1]
  {chi_S[161]@lam=2, chi_S[162]@lam=3} (degree 4290428142) --> {chi_N[88]@lam=2, chi_N[86]@lam=3} (degree 6)  [case 2, lambdas={2,3}, C_p(S)=1, C_p(N)=1]
  {chi_S[157]@lam=2, chi_S[158]@lam=3} (degree 2801912256) --> {chi_N[95]@lam=2, chi_N[93]@lam=3} (degree 6)  [case 2, lambdas={2,3}, C_p(S)=1, C_p(N)=1]
  {chi_S[151]@lam=2, chi_S[152]@lam=3} (degree 1553430879) --> {chi_N[96]@lam=2, chi_N[94]@lam=3} (degree 6)  [case 2, lambdas={2,3}, C_p(S)=1, C_p(N)=1]
  {chi_S[147]@lam=2, chi_S[148]@lam=3} (degree 1349587008) --> {chi_N[99]@lam=2, chi_N[97]@lam=3} (degree 6)  [case 2, lambdas={2,3}, C_p(S)=1, C_p(N)=1]
  {chi_S[149]@lam=2, chi_S[150]@lam=3} (degree 1349587008) --> {chi_N[100]@lam=2, chi_N[98]@lam=3} (degree 6)  [case 2, lambdas={2,3}, C_p(S)=1, C_p(N)=1]
  {chi_S[137]@lam=2, chi_S[138]@lam=3} (degree 1050717096) --> {chi_N[19]@lam=2, chi_N[17]@lam=3} (degree 3)  [case 2, lambdas={2,3}, C_p(S)=1, C_p(N)=1]
  {chi_S[135]@lam=2, chi_S[136]@lam=3} (degree 330032934) --> {chi_N[20]@lam=2, chi_N[18]@lam=3} (degree 3)  [case 2, lambdas={2,3}, C_p(S)=1, C_p(N)=1]
  {chi_S[129]@lam=2, chi_S[130]@lam=3} (degree 203843871) --> {chi_N[22]@lam=2, chi_N[21]@lam=3} (degree 3)  [case 2, lambdas={2,3}, C_p(S)=1, C_p(N)=1]
  {chi_S[125]@lam=2, chi_S[126]@lam=3} (degree 195019461) --> {chi_N[24]@lam=2, chi_N[23]@lam=3} (degree 3)  [case 2, lambdas={2,3}, C_p(S)=1, C_p(N)=1]
  {chi_S[127]@lam=2, chi_S[128]@lam=3} (degree 195019461) --> {chi_N[30]@lam=2, chi_N[29]@lam=3} (degree 3)  [case 2, lambdas={2,3}, C_p(S)=1, C_p(N)=1]
  {chi_S[123]@lam=2, chi_S[124]@lam=3} (degree 43779879) --> {chi_N[32]@lam=2, chi_N[31]@lam=3} (degree 3)  [case 2, lambdas={2,3}, C_p(S)=1, C_p(N)=1]
  {chi_S[121]@lam=2, chi_S[122]@lam=3} (degree 25356672) --> {chi_N[35]@lam=2, chi_N[33]@lam=3} (degree 3)  [case 2, lambdas={2,3}, C_p(S)=1, C_p(N)=1]
  {chi_S[117]@lam=2, chi_S[118]@lam=3} (degree 6724809) --> {chi_N[36]@lam=2, chi_N[34]@lam=3} (degree 3)  [case 2, lambdas={2,3}, C_p(S)=1, C_p(N)=1]
  {chi_S[113]@lam=2, chi_S[114]@lam=3} (degree 306153) --> {chi_N[39]@lam=2, chi_N[37]@lam=3} (degree 3)  [case 2, lambdas={2,3}, C_p(S)=1, C_p(N)=1]
  {chi_S[115]@lam=2, chi_S[116]@lam=3} (degree 306153) --> {chi_N[40]@lam=2, chi_N[38]@lam=3} (degree 3)  [case 2, lambdas={2,3}, C_p(S)=1, C_p(N)=1]
  {chi_S[111]@lam=2, chi_S[112]@lam=3} (degree 64584) --> {chi_N[42]@lam=2, chi_N[41]@lam=3} (degree 3)  [case 2, lambdas={2,3}, C_p(S)=1, C_p(N)=1]
  {chi_S[109]@lam=2, chi_S[110]@lam=3} (degree 783) --> {chi_N[44]@lam=2, chi_N[43]@lam=3} (degree 3)  [case 2, lambdas={2,3}, C_p(S)=1, C_p(N)=1]
\end{lstlisting}

\end{document}